\documentclass[11pt]{amsart}
\usepackage{amsfonts,amssymb,amscd,amsmath,enumerate,verbatim,calc,graphicx}
%\usepackage{showkeys}

%
%------    GENERAL MACROS    -----
%]
% Standard rings and fields, affine and projective space
%
\def\NZQ{\mathbb}               % the font for N,Z,Q,R,C

\def\ZZ{{\NZQ Z}}
\def\RR{{\NZQ R}}

%
%------------------------------------------------
% Symbols in "Fraktur"
%
\def\frk{\mathfrak}               % font for "Fraktur"

\def\Phi{{\frk N}}
%
%------------------------------------------------
% Small letters in bold
%

\def\eb{{\bold e}}
%---------------------------------------------------
% Greek letters

%
\def\opn#1#2{\def#1{\operatorname{#2}}} % to make operators
%------------------------------------------------
% Numerical invariants of rings, ideals, and modules
%
\opn\chara{char} \opn\length{\ell} \opn\pd{pd} \opn\rk{rk}
\opn\projdim{proj\,dim} \opn\injdim{inj\,dim} \opn\rank{rank}
\opn\depth{depth} \opn\grade{grade} \opn\height{height}
\opn\embdim{emb\,dim} \opn\codim{codim}

\opn\Tr{Tr} \opn\bigrank{big\,rank}
\opn\superheight{superheight}\opn\lcm{lcm}
\opn\trdeg{tr\,deg}%\emph{
\opn\reg{reg} \opn\lreg{lreg} \opn\ini{in} \opn\lpd{lpd}
\opn\size{size}\opn{\mult}{mult}
%------------------------------------------------
% Convexity
%
\opn\aff{aff} \opn\con{conv} \opn\relint{relint} \opn\st{st}
\opn\lk{lk} \opn\cn{cn} \opn\inte{int} \opn\vol{vol}
\opn\link{link} \opn\star{star}
%------------------------------------------------
% Graded rings and Rees algebras
\opn\gr{gr}

%Tonys commands

\def\Fc{{\mathcal F}}
\def\Pc{{\mathcal P}}

%
%------------------------------------------------

\def\lcl{\left\lceil}
\def\rcl{\right\rceil}
\def\lfr{\left\lfloor}
\def\rfr{\right\rfloor}

\newtheorem{Theorem}{Theorem}[section]
\newtheorem{Lemma}[Theorem]{Lemma}

\newtheorem{Remark}[Theorem]{Remark}

\newtheorem{Example}[Theorem]{Example}

\newtheorem{Problem}[Theorem]{Problem}

\newtheorem{Question}[Theorem]{Question}
%
% We like the var forms of some greek letters (as taught in German schools)
%
\numberwithin{equation}{section}
%
%           We print on A4 paper
%
\textwidth=15cm \textheight=22cm \topmargin=0.5cm
\oddsidemargin=0.5cm \evensidemargin=0.5cm \pagestyle{plain}

\begin{document}

\title{Unimodality on $\delta$-vectors of lattice polytopes 
and two related properties}

\author{Akihiro Higashitani}
\thanks{
{\bf 2010 Mathematics Subject Classification:}
Primary 52B20; Secondary 52B12. \\
\;\;\;\; {\bf Keywords:}
Lattice polytope, Ehrhart polynomial, $\delta$-vector, 
unimodal sequence, alternatingly increasing, log-concave. \\
\;\;\;\; 
The author is partially supported by a JSPS Fellowship for Young Scientists and by JSPS Grant-in-Aid for Young Scientists (B) $\sharp$26800015. 
}
\address{Akihiro Higashitani,
Department of Mathematics, Graduate School of Science, 
Kyoto University, Kitashirakawa-Oiwake cho, Sakyo-ku, Kyoto, 606-8502, Japan}
\email{ahigashi@math.kyoto-u.ac.jp}

\begin{abstract}
In this paper, we investigate two properties concerning the unimodality of the $\delta$-vectors 
of lattice polytopes, which are log-concavity and alternatingly increasingness. 
For lattice polytopes $\Pc$ of dimension $d$, 
we prove that the dilated lattice polytopes $n\Pc$ have strictly log-concave and strictly alternatingly increasing 
$\delta$-vectors if $n > \max\{s,d+1-s\}$, where $s$ is the degree of the $\delta$-polynomial of $\Pc$. 
The bound $\max\{s,d+1-s\}$ for $n$ is reasonable. 
We also provide several kinds of unimodal (or non-unimodal) $\delta$-vectors. 
Concretely, we give examples of lattice polytoeps whose $\delta$-vectors are not unimodal, 
unimodal but neither log-concave nor alternatingly increasing, 
alternatingly increasing but not log-concave, and log-concave but not alternatingly increasing, respectively. 
\end{abstract}

\maketitle

\section{Introduction}
The $\delta$-vectors of lattice polytopes 
are one of the most fascinating objects on enumerative combinatorics. 
In this paper, we focus on the unimodality question on $\delta$-vectors of lattice polytopes 
and investigate two related properties on the unimodality, 
called ``log-concave'' and ``alternatingly increasing''. 

Let $\Pc \subset \RR^N$ be a {\em lattice} polytope of dimension $d$, 
which is a convex polytope all of whose vertices are lattice points in the lattice $\ZZ^N$. 
Given a positive integer $m$, we define $$i(\Pc,m) = |m\Pc \cap \ZZ^N|,$$ 
where $m\Pc = \{ m\alpha : \alpha \in \Pc \}$ and $|\cdot|$ denotes the cardinality. 
The enumerative function $i(\Pc,m)$ is actually a polynomial in $m$ of degree $d$ 
with its constant term 1 (\cite{Ehrhart}). 
This polynomial $i(\Pc,m)$ is called the {\em Ehrhart polynomial} of $\Pc$. 
Moreover, $i(\Pc,m)$ satisfies {\em Ehrhart--Macdonald reciprocity} (see \cite[Theorem 4.1]{BeckRobins}): 
\begin{align}\label{reciprocity}
|m \Pc^\circ \cap \ZZ^N| =( - 1 )^d i(\Pc, - m) \text{ for each integer }m > 0, 
\end{align}
where $\Pc^\circ$ denotes the relative interior of $\Pc$. 
We refer the reader to \cite[Chapter 3]{BeckRobins} or \cite[Part II]{HibiRedBook} 
for the introduction to the theory of Ehrhart polynomials.

We define the sequence $\delta_0, \delta_1, \ldots, \delta_d$ of integers by the formula 
\begin{eqnarray*}
(1 - t)^{d + 1}\left(1+  \sum_{m=1}^{\infty} i(\Pc,m) t^m \right)
= \sum_{i=0}^d \delta_i t^i. 
\end{eqnarray*}
We call the integer sequence $$\delta(\Pc)= (\delta_0, \delta_1, \ldots, \delta_d)$$ 
the {\em $\delta$-vector} (also called {\em Ehrhart $\delta$-vector} or $h^*$-vector) of $\Pc$ and 
the polynomial $\delta_\Pc(t)=\delta_0+\delta_1t+\cdots+\delta_dt^d$ the {\em $\delta$-polynomial} of $\Pc$.

By \eqref{reciprocity}, one has 
\begin{eqnarray*}
(1 - t)^{d + 1}\sum_{m=1}^\infty |m \Pc^\circ \cap \ZZ^N| t^m = \sum_{i=0}^d \delta_{d-i} t^{i+1}. 
\end{eqnarray*}
Thus it follows that 
\begin{align}\label{naibu}
\min\{ k : k\Pc^\circ \cap \ZZ^N \neq \emptyset \} = d+1- \max\{ i : \delta_i \neq 0 \}.
\end{align}

The $\delta$-vectors of lattice polytopes have the following properties: 
\begin{itemize}
\item $\delta_0=1$, $\delta_1 = |\Pc \cap \ZZ^N| - (d + 1)$ and 
$\delta_d = |\Pc^\circ \cap \ZZ^N|.$ 
Hence, $\delta_1 \geq \delta_d$. In particular, when $\delta_1=\delta_d$, 
$\Pc$ must be a simplex. 
\item Each $\delta_i$ is nonnegative (\cite{StanleyDRCP}). 
\item If $\Pc^\circ \cap \ZZ^N$ is nonempty, i.e., $\delta_d > 0$, 
then $\delta_1 \leq \delta_i$ for each $1 \leq i \leq d - 1$ (\cite{HibiLBT}). 
\item The leading coefficient $(\sum_{i=0}^d\delta_i)/d!$ of $i(\Pc,n)$ 
is equal to the volume of $\Pc$ (\cite[Corollary 3.20, 3.21]{BeckRobins}). 
\end{itemize}

\smallskip

There are two well-known inequalities on $\delta$-vectors. Let $s = \max\{ i : \delta_i \neq 0 \}$. One is 
\begin{eqnarray}\label{Stanley}
\delta_0 + \delta_1 + \cdots + \delta_i 
\leq \delta_s + \delta_{s-1} + \cdots + \delta_{s-i}, 
\;\;\;\;\; 0 \leq i \leq s, 
\end{eqnarray}
which is proved by Stanley \cite{StanleyJPAA}, and another one is 
\begin{eqnarray}\label{Hibi}
\delta_d + \delta_{d-1} + \cdots + \delta_{d-i} 
\leq \delta_1 + \delta_2 + \cdots + \delta_{i+1}, 
\;\;\;\;\; 0 \leq i \leq d-1, 
\end{eqnarray}
which appears in the work of Hibi \cite[Remark (1.4)]{HibiLBT}. 

\bigskip

For a lattice polytope $\Pc \subset \RR^N$, 
we say that $\Pc$ has the {\em integer decomposition property} (IDP, for short) if 
for each integer $\ell \geq 1$ and $\alpha \in \ell\Pc \cap \ZZ^N$, 
there are $\alpha_1,\ldots,\alpha_\ell$ in $\Pc \cap \ZZ^N$ such that $\alpha=\alpha_1+\cdots+\alpha_\ell$. 
Having IDP is also known as what is {\em integrally closed}.

\bigskip

%%%%%%%%%%%%%%%%%%%%%%%%%%%%%%%%%%

We also recall the following three notions. Let $(a_0,a_1,\ldots,a_d)$ be a sequence of real numbers. 
\begin{itemize}
\item We say that $(a_0,a_1,\ldots,a_d)$ is {\em unimodal} if there is some $c$ with $0 \leq c \leq d$ such that 
$$a_0 \leq a_1 \leq \cdots \leq a_c \geq a_{c+1} \geq \cdots \geq a_d.$$ 
If each inequality is strict, then we say that it is {\em strictly unimodal}. 
\item $(a_0,a_1,\ldots,a_d)$ is called {\em log-concave} if for each $1 \leq i \leq d-1$, 
one has $$a_i^2 \geq a_{i-1}a_{i+1}.$$ 
If $a_i^2 > a_{i-1}a_{i+1}$ for each $i$, then it is called {\em strictly log-concave}. 
\item (\cite[Definition 2.9]{SchLan}) We call $(a_0,a_1,\ldots,a_d)$ {\em alternatingly increasing} if 
$a_i \leq a_{d-i}$ for $0 \leq i \leq \lfr (d-1)/2 \rfr$ and $a_{d+1-i} \leq a_i$ for $1 \leq i \leq \lfr d/2 \rfr$ are satisfied. 
Namely, 
$$a_0 \leq a_d \leq a_1 \leq a_{d-1} \leq \cdots \leq a_{\lfr (d-1)/2 \rfr} \leq a_{d- \lfr (d-1)/2 \rfr} \leq a_{\lfr (d+1)/2 \rfr}.$$ 
If each inequality is strict, then we call it {\em strictly alternatingly increasing}. 
\end{itemize}

Note that $(a_0,\ldots,a_d)$ is unimodal (resp. strictly unimodal) 
if it is log-concave (resp. log-concave) or alternatingly increasing (resp. strictly alternatingly increasing).

%%%%%%%%%%%%%%%%%%%%%%%%%%%%%%%%%%%

\bigskip

Our motivation to organize this paper is to give some answer for the following: 
\begin{Question}\label{motivation}
Let $\Pc$ be a lattice polytope having IDP with at least one interior lattice point. 
Then is $\delta(\Pc)$ always unimodal? 
\end{Question}
The similar question is also mentioned in \cite[Question 1.1]{SchLan}. 
Moreover, the following has been conjectured by Stanley \cite{StanleyLC} in 1989: 
the $h$-vectors of standard graded Cohen--Macaulay domains are always unimodal. 
This conjecture still seems to be open. 
We note that Question \ref{motivation} is the case of Ehrhart rings (see \cite[Part II]{HibiRedBook}) 
for this question with additional condition ``$a$-invariant $-1$''. 

For this question, 
the following facts on the unimodality of the $\delta$-vectors of lattice polytopes are known: 
\begin{enumerate}
\item If $\Pc \subset \RR^d$ is a {\em reflexive polytope} (introduced in \cite{Batyrev}), 
which is a lattice polytope whose dual polytope $\Pc^\vee=\{y \in \RR^d : \langle x,y\rangle \leq 1 \text{ for all }x \in \Pc\}$ 
is also a lattice polytope, of dimension at most 5, then $\delta(\Pc)$ is unimodal. 
This follows from \cite[Theorem 1.1]{HibiLBT} and \cite{HibiCombinatorica}. 
Note that every reflexive polytope contains exactly one interior lattice point. 
\item Hibi conjectured that all the $\delta$-vectors of reflexive polytopes are unimodal (\cite[\S 36]{HibiRedBook}). 
However, counterexamples were found by Musta\c{t}\u{a} and Payne \cite{MP, Payne}. 
On the other hand, their counterexamples do not have IDP. 
It may be still open whether there exista a reflexive polytope having IDP whose $\delta$-vector is not unimodal. 
\item Bruns and R\"omer \cite{BR} proved that each reflexive polytope with a regular unimodular triangulation 
has a unimodal $\delta$-vector. %This result is a generalization of a result by Athanasiadis \cite{Ath}. 
Note that if a lattice polytope has a regular unimodular triangulation, then it also has IDP, 
while the converse is not true in general. 
\item Schepers and Van Langenhoven \cite[Proposition 2.17]{SchLan} proved that every parallelepiped 
with at least one interior lattice point has an alternatingly increasing $\delta$-vector. 
\end{enumerate}

In this paper, as a further contribution for Question \ref{motivation}, we prove the following: 
\begin{Theorem}\label{meinseoremu}
Let $\Pc \subset \RR^N$ be a lattice polytope of dimension $d$ and $s$ the degree of $\delta_\Pc(t)$. 
Let $(\delta_0,\delta_1,\ldots,\delta_d)$ be the $\delta$-vector of the dilated polytope $n\Pc$ 
for a positive integer $n$. Then the following statements hold: 
\begin{itemize}
\item[(i)] $(\delta_0,\delta_1,\ldots,\delta_d)$ is strictly log-concave when $n \geq s$; 
\item[(ii)] We have $\delta_i \leq \delta_{d-i}$ for $1 \leq i \leq \lfr (d-1)/2 \rfr$ and 
$\delta_{d+1-i} < \delta_i$ for $1 \leq i \leq \lfr d/2 \rfr$ when $n \geq \max\{ s, d+1-s \}$. 
Moreover, if $|(d+1-s)\Pc^\circ \cap \ZZ^N| > 1$, 
then we have $\delta_i < \delta_{d-i}$ for $0 \leq i \leq \lfr (d-1)/2 \rfr$. 
Hence, $(\delta_0,\delta_1,\ldots,\delta_d)$ is strictly alternatingly increasing 
when $n \geq \max\{ s, d+1-s \}$ with $|(d+1-s)\Pc^\circ \cap \ZZ^N| > 1$, 
or when $n > \max\{ s, d+1-s \}$. 
\end{itemize}
\end{Theorem}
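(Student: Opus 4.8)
The plan is to reduce both parts to a single explicit formula for the $\delta$-vector of $n\Pc$ together with the log-concavity and symmetry of one auxiliary ``box'' sequence. Write $(\gamma_0,\ldots,\gamma_s)$ for the $\delta$-vector of $\Pc$, so that $i(\Pc,m)=\sum_{i=0}^{s}\gamma_i\binom{m+d-i}{d}$. Since $i(n\Pc,m)=i(\Pc,nm)$, I would substitute $m\mapsto nm$ and re-expand $\binom{nm+d-i}{d}$ in the basis $\{\binom{m+d-\ell}{d}\}_{0\le\ell\le d}$; reading off the numerator of the Ehrhart series of $n\Pc$ gives
\[
\delta_\ell=\sum_{i=0}^{s}\gamma_i\,c_{\ell n-i},\qquad c_y:=[z^y]\bigl(1+z+\cdots+z^{n-1}\bigr)^{d+1},
\]
equivalently $\delta_{n\Pc}(t)$ is obtained by sampling $\delta_\Pc(z)\bigl(\tfrac{1-z^{n}}{1-z}\bigr)^{d+1}$ at the exponents divisible by $n$. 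Here $c_y$ counts the lattice points of $\{0,\ldots,n-1\}^{d+1}$ lying on the hyperplane of coordinate-sum $y$, so $c_y\ge 0$ and the transformation $(\gamma_i)\mapsto(\delta_\ell)$ is nonnegative.

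The sequence $(c_y)$ is the engine of the proof, and I would record three of its properties: it is symmetric, $c_y=c_{(d+1)(n-1)-y}$; it is unimodal; and it is log-concave, being the coefficient sequence of a power of the log-concave polynomial $1+z+\cdots+z^{n-1}$. I will use the standard consequence of log-concavity that, for a positive log-concave sequence, a more central pair of indices with a given sum has the larger product: if $p+q=p'+q'$ and $|p-q|\le|p'-q'|$ then $c_pc_q\ge c_{p'}c_{q'}$, strictly when the inequality of spreads is strict and the entries are strictly log-concave.

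For (i), the hypothesis $n\ge s$ is exactly what forces every shift $i$ occurring in the formula to satisfy $i\le s\le n$. Writing
\[
\delta_\ell^2-\delta_{\ell-1}\delta_{\ell+1}=\tfrac12\sum_{a,b=0}^{s}\gamma_a\gamma_b\bigl(2c_{\ell n-a}c_{\ell n-b}-c_{(\ell-1)n-a}c_{(\ell+1)n-b}-c_{(\ell-1)n-b}c_{(\ell+1)n-a}\bigr),
\]
each bracket compares the pair $(\ell n-a,\ell n-b)$ of spread $|a-b|\le s$ with the pairs obtained by pushing the two indices apart by $n$, whose spread is $\ge 2n-|a-b|\ge n\ge|a-b|$; by the centrality fact every bracket is $\ge 0$, and since $\gamma_a\gamma_b\ge 0$ the sequence is log-concave. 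Strictness comes from the diagonal term $a=b=0$ (present because $\gamma_0=1$), which contributes $c_{\ell n}^2-c_{(\ell-1)n}c_{(\ell+1)n}>0$ once one checks that $(c_y)$ is strictly log-concave across a window of width $2n$ in the relevant range. Verifying this strictness, rather than the inequality itself, is the only subtle point of part (i).

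For (ii) I would run the analogous comparison between $\delta_\ell$ and its reflected partners using the symmetry $c_y=c_{(d+1)(n-1)-y}$, which rewrites $c_{\ell n-i}$ as $c_{(d+1-\ell)n-(d+1)+i}$ and thereby turns each inequality $\delta_i\le\delta_{d-i}$ or $\delta_{d+1-i}\le\delta_i$ into a comparison of partial sums of $(\gamma_i)$; these are precisely what Stanley's inequality \eqref{Stanley} and Hibi's inequality \eqref{Hibi} supply. The requirement $n\ge d+1-s$ enters because, by \eqref{naibu}, it is equivalent to $\delta_d=|(n\Pc)^\circ\cap\ZZ^N|=|n\Pc^\circ\cap\ZZ^N|>0$, i.e.\ to the last entry being positive so that the sequence can be alternatingly increasing; the same positivity lets the third listed property ($\delta_1\le\delta_\ell$) contribute. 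Finally, expanding $|m\Pc^\circ\cap\ZZ^N|=\sum_{j}\gamma_j\binom{m+j-1}{d}$ at $m=d+1-s$ shows $|(d+1-s)\Pc^\circ\cap\ZZ^N|=\gamma_s$, so the extra hypothesis is simply $\gamma_s\ge 2$, and this surplus is what upgrades $\delta_i\le\delta_{d-i}$ to a strict inequality; the borderline $\gamma_s=1$ is instead handled by spending one more dilation, which explains the alternative hypothesis $n>\max\{s,d+1-s\}$. I expect this reflected bookkeeping in (ii)---matching the shifted, reversed $\gamma$-coefficients to the exact forms of \eqref{Stanley} and \eqref{Hibi} and tracking strictness---to be the main obstacle, whereas (i) is essentially immediate once the formula and the centrality fact are in hand.
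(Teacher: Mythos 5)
Your setup coincides with the paper's: the sampling formula $\delta_\ell=\sum_i\gamma_i c_{\ell n-i}$ is Lemma \ref{Veronese}, the symmetry, unimodality and strict log-concavity of $(c_y)$ are the paper's \eqref{syma}--\eqref{unia}, your ``centrality'' fact is Lemma \ref{kihon}, and your argument for (i) --- expanding $\delta_\ell^2-\delta_{\ell-1}\delta_{\ell+1}$ into brackets indexed by pairs $(a,b)$, using $n\ge s$ to control the spreads, and extracting strictness from the diagonal term $a=b=0$ with $\gamma_0=1$ --- is exactly the computation in the paper's proof of Theorem \ref{main1}(i). Part (i) of your proposal is correct and essentially identical to the paper's.

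The genuine gap is in (ii). After reflecting via $c_y=c_{(d+1)(n-1)-y}$ one obtains $\delta_{d-i}-\delta_i=\sum_j\gamma_j\bigl(c_{n(i+1)-(d+1)+j}-c_{ni-j}\bigr)$, and the individual differences here have \emph{mixed} signs: the $j$-th pair of indices differs by $n-(d+1)+2j$, which is negative for small $j$ and positive for large $j$ whenever $n<d+1$. So the reflection does \emph{not} ``turn each inequality into a comparison of partial sums of $(\gamma_i)$'' that Stanley's and Hibi's inequalities directly supply; \eqref{Stanley} and \eqref{Hibi} bound partial sums of the weights, and converting that into nonnegativity of this sign-mixed weighted sum requires a redistribution argument. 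The paper does this by a greedy matching: listing the nonzero coefficients $h_{k_0},\ldots,h_{k_\ell}$, defining $m(j)$ by comparing prefix sums against suffix sums, proving $k_j+k_{m(j)}\ge s$ from \eqref{st}, and then verifying for each matched pair that $A(j,m(j))=(a_{n(d-i)-k_j}-a_{ni-k_{m(j)}})+(a_{n(d-i)-k_{m(j)}}-a_{ni-k_j})\ge 0$; that verification needs case-by-case checks that the relevant indices lie below the midpoint $\lfr (d+1)(n-1)/2\rfr$ so that \eqref{unia} applies, and it is in these index checks (via $n-d-1+k_j+k_{m(j)}\ge n-(d+1-s)\ge 0$) that $n\ge d+1-s$ actually enters --- not merely through $\delta_d>0$ as you suggest. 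The analogous construction with $n(j)$, $B(p,q)$ and \eqref{hi} is needed for $\delta_i>\delta_{d+1-i}$, and assembling the matched terms into the nonnegative quantities $f_j$, $g_j$ constitutes essentially the whole of the paper's proof of Theorem \ref{main1}(ii); your proposal only gestures at this (``reflected bookkeeping''), which is precisely the part that cannot be waved through. Your peripheral observations are correct: $|(d+1-s)\Pc^\circ\cap\ZZ^N|=\gamma_s$, so the strictness hypothesis is $h_s>1$ exactly as in Theorem \ref{main1}, and one extra dilation ($n\ge d+2-s$) makes the relevant index gaps strictly positive so that \eqref{unia} gives strictness without $h_s>1$; but as submitted, (ii) identifies the destination without supplying the argument that gets there.
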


In \cite{BeckStapledon} and \cite{BrentiWelker}, it has been proved that 
for a lattice polytope $\Pc$ of dimension $d$, there exists an integer $n_d$ such that 
the $\delta$-vector of a dilated polytope $n\Pc$ are strictly log-concave and strictly alternatingly increasing 
for each $n \geq n_d$. Theorem \ref{meinseoremu} gives an explicit bound for $n_d$. 
Moreover, the following remark says that our bound $\max\{s,d+1-s\}$ is reasonable in some sense. 
\begin{Remark}\label{rima-ku}{\em 
By \eqref{naibu}, we see that $n\Pc^\circ \cap \ZZ^N \not= \emptyset$ if and only if $n \geq d+1-s$. 

Moreover, by \cite[Theorem 1.1]{CHHH}, the inequality $\mu_{\text{idp}}(\Pc) \leq \mu_{\text{Ehr}}(\Pc)$ holds. 
Since $\mu_{\text{Ehr}}(\Pc)=\max\{i : \delta_i \not= 0\}=s$, we see that $n\Pc$ has IDP if $n \geq s$. 
In addition, when $\mu_{\text{idp}}(\Pc) = \mu_{\text{Ehr}}(\Pc)$, this bound is sharp. 

Therefore, $n\Pc$ has IDP and contains at least one interior lattice point if $n \geq \max\{s,d+1-s\}$ 
and the bound $\max\{s,d+1-s\}$ sometimes becomes optimal. 
}\end{Remark}

For the proof of Theorem \ref{meinseoremu}, we prove a more general statement (Theorem \ref{main1}) in Section 2. 
(See Remark \ref{ippanka}, too.) 

Moreover, we also provide several kinds of $\delta$-vectors of lattice polytopes concerning the unimodality of $\delta$-vectors. 
We construct an infinite family of lattice polytopes whose $\delta$-vectors are not unimodal in Section 3.1, 
unimodal but neither log-concave nor alternatingly increasing for even dimensions in Section 3.2, 
alternatingly increasing but not log-concave in Section 3.3, and  
log-concave but not alternatingly increasing for law dimensions, respectively. (See Figure \ref{figure}.)

\begin{figure}[htb!]
\centering
\includegraphics[scale=0.6]{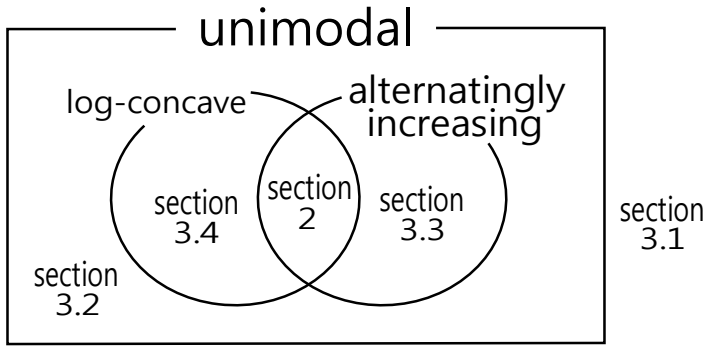}
\caption{Organization of this paper}\label{figure}
\end{figure}

%%%%%%%%%%%%%%%%%%%%%%%%%%%

\section{Unimodality on $\delta$-vectors of dilated polytopes}

%'»'ê'ÍŽŸ'Ì•â'è'©'ç'·'®]'¤B
%A(q)'ðstrictly log-concave'Ì"ñ•‰ŒW"'½€Ž®AB(q)'ðlog-concave'Ì"ñ•‰ŒW"'½€Ž®'Æ'·'éB
%'±'Ì'Æ'«AA(q)B(q)'àstrictly log-concave'Ì"ñ•‰ŒW"'½€Ž®'É'È'éB
%
%'±'ê'©'çA(1+t+\cdots+t^{n-1})^{d+1}'ªstrictly log-concaveA''܂è'́Astrictly unimodal'É'È'é'̂ŁA
%(i) ¨ h_0(a_{ni}^2-a_{ni-n}a_{ni+n}) > 0
%(ii) ¨ A(0,\ell) > 0
%(iii) ¨ A(0,0) > 0
%'©'ç'·'®'ɏ]'¤B

We recall some notation from \cite{BeckStapledon}. 
For a polynomial $h(t)=h_0+h_1t+\cdots+h_dt^d$ in $t$ of degree at most $d$ 
with nonnegative coefficients and $h_0=1$, there is a polynomial $g(m)$ in $m$ of degree $d$ such that 
$$\sum_{m = 0}^\infty g(m)t^m = \frac{h(t)}{(1-t)^{d+1}}.$$ (See \cite[Lemma 3.9]{BeckRobins}.) 
Note that the polynomial $g(m)$ can be written like $g(m)=\sum_{i=0}^d h_i \binom{m+d-i}{d}$. 
For each integer $n$, we define $U_n h(t)=h_0^{(n)}+h_1^{(n)} t + \cdots + h_d^{(n)} t^d$ as follows: 
$$\sum_{m = 0}^\infty g(nm)t^m = \frac{U_n h(t)}{(1-t)^{d+1}}.$$

The main result of this paper is the following: 
\begin{Theorem}\label{main1}
Let $d \geq 5$ and let $h(t)=\sum_{j=0}^dh_jt^j$ be a polynomial in $t$ of degree $s$, where $s \leq d$, 
with nonnegative coefficients and $h_0=1$. 
Let $n$ be a positive integer and $\delta_i=h_i^{(n)}$ the coefficient of $U_nh(t)$ for $0 \leq i \leq d$. 
Then the following statements hold: 
\begin{itemize}
\item[(i)]
$(\delta_0,\delta_1,\ldots,\delta_d)$ is strictly log-concave when $n \geq s$; 
\item[(ii)] If $h_0,h_1,\ldots,h_d$ satisfy the inequalities 
\begin{align}\label{st}
h_0+h_1+\cdots+h_i \leq h_s+h_{s-1}+\cdots+h_{s-i} \;\text{ for }\; 0 \leq i \leq \lfr s/2 \rfr 
\end{align}
and 
\begin{align}\label{hi}
h_d+h_{d-1}+\cdots+h_{d-i} \leq h_1+h_2+\cdots+h_{i+1} \;\text{ for }\; 0 \leq i \leq \lfr (d+1)/2 \rfr, 
\end{align}
then we have $\delta_j \leq \delta_{d-j}$ for $1 \leq j \leq \lfr (d-1)/2 \rfr$ and 
$\delta_{d+1-k} < \delta_k$ for $1 \leq k \leq \lfr d/2 \rfr$ when $n \geq \max\{s,d+1-s\}$. 
Moreover, if $h_s > 1$, then we also have $\delta_j < \delta_{d-j}$ for $0 \leq j \leq \lfr (d-1)/2 \rfr$, 
and thus, $(\delta_0,\delta_1,\ldots,\delta_d)$ is strictly alternatingly increasing. 
\end{itemize}
\end{Theorem}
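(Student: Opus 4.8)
The plan is to reduce the whole theorem to one explicit formula for the coefficients $\delta_i=h_i^{(n)}$ and then to exploit the symmetry and strict log-concavity of the Eulerian numbers. Applying $(1-t)^{d+1}$ to $\sum_{m\ge 0}g(nm)t^m$ and substituting $g(p)=\sum_{j=0}^{s}h_j\binom{p+d-j}{d}$ gives
$$\delta_i=\sum_{k=0}^{i}(-1)^k\binom{d+1}{k}\,g\!\left(n(i-k)\right)=\sum_{j=0}^{s}h_j\,c_{i,j}^{(n)},\qquad c_{i,j}^{(n)}=\sum_{k=0}^{i}(-1)^k\binom{d+1}{k}\binom{n(i-k)+d-j}{d}.$$
The first task is to understand the building blocks $c_{i,j}^{(n)}$. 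Each is (up to the shift $d-j$) an $i$-th $\delta$-coefficient of a dilated simplex, hence a nonnegative integer, and as a function of $n$ it is a polynomial of degree $d$ whose top coefficient is $A(d,i)/d!$, where
$$A(d,i):=\sum_{k=0}^{i}(-1)^k\binom{d+1}{k}(i-k)^d$$
is the Eulerian number, \emph{independent of $j$}. Thus $\delta_i=\frac{h(1)}{d!}\,A(d,i)\,n^{d}+(\text{lower order in }n)$, and I would compute the $n^{d-1}$-term of this expansion explicitly enough to control the errors. I would record the two facts I need about $A(d,i)$: the symmetry $A(d,d+1-i)=A(d,i)$ and the strict log-concavity (real-rootedness) of $A(d,1),\dots,A(d,d)$, with $A(d,0)=0$ and $A(d,1)=1$.

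For (i) I would establish $\delta_i^2>\delta_{i-1}\delta_{i+1}$ for $1\le i\le d-1$. For the interior indices the leading term of $\delta_i^2-\delta_{i-1}\delta_{i+1}$ is $\frac{h(1)^2}{(d!)^2}\bigl(A(d,i)^2-A(d,i-1)A(d,i+1)\bigr)n^{2d}$, which is strictly positive by the strict log-concavity of the Eulerian numbers; the remaining work is to bound the subleading terms so that they cannot overturn this once $n\ge s$. For the boundary indices $i=1$ and $i=d-1$ the two sides have different $n$-degrees (e.g. $\delta_0=1$ is constant while $\delta_1,\delta_2\sim n^{d}$), so there the inequality reduces to the positivity of the relevant $\delta$'s, which follows from the nonnegativity of the $c_{i,j}^{(n)}$. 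The hypothesis $d\ge 5$ enters precisely here: it guarantees enough Eulerian entries and a wide enough log-concavity margin for the error estimate to close exactly at the threshold $n\ge s$.

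For (ii) the mechanism is different because the leading terms often coincide. Since $A(d,d+1-k)=A(d,k)$, the $n^{d}$-terms of $\delta_{d+1-k}$ and $\delta_k$ cancel, so $\delta_k-\delta_{d+1-k}=\sum_{j}h_j\bigl(c_{k,j}^{(n)}-c_{d+1-k,j}^{(n)}\bigr)$ is governed entirely by the $n^{d-1}$ (and lower) data; similarly, $A(d,d-j)=A(d,j+1)\ge A(d,j)$ for $j\le \lfr (d-1)/2\rfr$ yields the weak inequality $\delta_j\le\delta_{d-j}$ at leading order. The plan is to rewrite each difference $\delta_{d-j}-\delta_j$ and $\delta_k-\delta_{d+1-k}$ as an explicit expression in the $h_j$ and then apply Abel (partial) summation, which replaces the individual $h_j$ by the partial sums $h_0+\dots+h_i$ and $h_d+\dots+h_{d-i}$; at that point the imposed inequalities \eqref{st} and \eqref{hi} can be invoked termwise to force the correct sign. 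The threshold $n\ge d+1-s$ is used through the analog of \eqref{naibu}, which ensures $\deg U_n h=d$ (equivalently $\delta_d>0$) so that the alternating chain reaches all the way, while $n\ge s$ keeps the coefficient differences of the correct sign; the strengthening to $\delta_j<\delta_{d-j}$ under $h_s>1$ comes from the strict positivity of the contribution of the top term $h_s\,c_{i,s}^{(n)}$.

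The main obstacle is the strict inequality $\delta_{d+1-k}<\delta_k$: because the leading Eulerian terms cancel, the entire inequality lives at order $n^{d-1}$ and below, so it cannot be read off from crude asymptotics. Closing it requires an expansion of $c_{i,j}^{(n)}$ that is exact to order $n^{d-1}$ together with a uniform bound on the tail, reorganized by Abel summation so that \eqref{st} and \eqref{hi} apply, and with the two thresholds $n\ge s$ and $n\ge d+1-s$ being exactly what makes both the error terms and the interior-point contribution have the right sign. A secondary, purely bookkeeping obstacle is the dependence on the parity of $d$ through the floor functions $\lfr(d-1)/2\rfr$, $\lfr d/2\rfr$ and the treatment of the extreme indices; this is presumably the reason the statement is restricted to $d\ge 5$, where the middle range is large enough for the comparisons to be uniform.
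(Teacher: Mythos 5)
Your starting identity is correct: indeed $c_{i,j}^{(n)}=a_{ni-j}$, where $a_k$ is the $k$-th coefficient of $(1+t+\cdots+t^{n-1})^{d+1}$, which is exactly the paper's Lemma \ref{Veronese}, so you and the paper begin from the same expansion $\delta_i=\sum_j h_j a_{ni-j}$. The genuine gap is in the proof mechanism. You propose to isolate the leading Eulerian term $h(1)A(d,i)n^d/d!$ and then ``bound the subleading terms so that they cannot overturn this once $n\ge s$,'' but at the stated thresholds there is no asymptotic smallness to exploit: $n$ can be as small as $\lceil (d+1)/2\rceil$, the $n^{d-1}$-terms are then of the same order as the Eulerian log-concavity margin, and the inequalities are homogeneous in the unbounded coefficients $h_j$, so no tail bound uniform over the cone of admissible $h$ can come out of degree counting in $n$. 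This asymptotic route is precisely how \cite{BeckStapledon} and \cite{BrentiWelker} prove that \emph{some} $n_d$ works; it cannot, by itself, recover the explicit bound $\max\{s,d+1-s\}$, which is the entire content of the theorem. For part (ii) you concede this yourself: the inequality $\delta_{d+1-k}<\delta_k$ ``lives at order $n^{d-1}$ and below'' and closing it ``requires an expansion exact to order $n^{d-1}$ together with a uniform bound on the tail'' --- that is the missing proof, not a step you have supplied. (Your suggestion that $d\ge 5$ is what makes the error estimate close at $n\ge s$ also has no counterpart in the actual argument.)

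What the paper does instead is exact and combinatorial, with no error terms at all. It uses three exact properties of the $a_k$: the symmetry $a_i=a_{(d+1)(n-1)-i}$, strict log-concavity, and strict increase up to the middle index $\lfloor (d+1)(n-1)/2\rfloor$. For (i) it expands $\delta_i^2-\delta_{i-1}\delta_{i+1}$ termwise into expressions of the form $h_ph_q\left(a_{ni-p}a_{ni-q}-a_{n(i-1)-p}a_{n(i+1)-q}\right)$ and shows each is positive or nonnegative via Lemma \ref{kihon}, the hypothesis $n\ge s$ entering only to keep the index shift $n-q+p\ge n-s\ge 0$; strictness comes from the $h_0^2$ term. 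For (ii) it regroups $\delta_{d-i}-\delta_i$ (resp. $\delta_i-\delta_{d+1-i}$) into sums of terms $f_j$ (resp. $g_j$) whose $h$-coefficients are differences of partial sums of the nonzero $h_{k_r}$, nonnegative precisely by \eqref{st} and \eqref{hi} through the auxiliary indices $m(j)$, $n(j)$ satisfying $k_j+k_{m(j)}\ge s$ and $k_j+k_{n(j)}\le d+1$, while the accompanying $a$-differences $A(p,q)$, $B(p,q)$ are nonnegative because the thresholds $n\ge s$ and $n\ge d+1-s$ place every relevant index on the increasing side of the symmetric sequence $a_k$. Your Abel-summation idea gestures toward this regrouping, but the crucial half --- proving the weights nonnegative via the exact symmetry and unimodality of the $a_k$, which is exactly where the two thresholds enter --- is absent, and without it the argument does not close.
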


\begin{Remark}\label{ippanka}{\em 
Since the inequalities \eqref{st} and \eqref{hi} are nothing but the inequalites \eqref{Stanley} and \eqref{Hibi} 
for $\delta$-vectors, the $\delta$-vectors of lattice polytopes satisfy all the conditions 
in Theorem \ref{main1} for $h_0,h_1,\ldots,h_d$. 
Therefore, as an immediate corollary of Theorem \ref{main1}, we obtain Theorem \ref{meinseoremu}. 
}\end{Remark}

For the proof of Theorem \ref{main1}, we recall a useful lemma for the computation of $U_n h(t)$. 
\begin{Lemma}[cf. {\cite[Lemma 3.2]{BeckStapledon}}]\label{Veronese}
Let $h(t)=\sum_{j=0}^dh_jt^j$ be the same as in Theorem \ref{main1} and 
let $U_n h(t)=\sum_{j=0}^d \delta_jt^j$. Then one has $\delta_i=c_{n i}$ for each $0 \leq i \leq d$, 
where $c_j$ is the coefficient of the polynomial $h(t) (1+t+\cdots+t^{n-1})^{d+1}$ in $t$. 
\end{Lemma}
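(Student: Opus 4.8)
The plan is to realize the operator $U_n$ as a \emph{decimation} (Veronese-type) operator on formal power series and to exploit how it interacts with multiplication by powers of $t^n$. For a formal power series $F(t)=\sum_{m\geq 0}a_mt^m$ and a positive integer $n$, define $V_n(F)(t)=\sum_{m\geq 0}a_{nm}t^m$, which extracts every $n$-th coefficient. Setting $A(t)=h(t)/(1-t)^{d+1}=\sum_{m\geq 0}g(m)t^m$, the defining relation $\sum_{m\geq 0}g(nm)t^m=U_nh(t)/(1-t)^{d+1}$ becomes
\begin{equation*}
U_nh(t)=(1-t)^{d+1}\,V_n\!\left(\frac{h(t)}{(1-t)^{d+1}}\right).
\end{equation*}
So the whole statement reduces to computing $V_n$ applied to this rational series and comparing with the coefficients $c_j$ of $h(t)(1+t+\cdots+t^{n-1})^{d+1}$.

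The key step, and the technical heart of the argument, is the commutation identity
\begin{equation*}
V_n\!\left(t^{nk}F(t)\right)=t^k\,V_n\!\left(F(t)\right)\qquad\text{for every integer }k\geq 0,
\end{equation*}
valid for any formal power series $F$. I would prove this by a direct index shift: with $F=\sum_m a_mt^m$ and the convention $a_m=0$ for $m<0$, the coefficient of $t^i$ on the left is $a_{n(i-k)}$, and the same holds on the right, so the two sides agree termwise. This is where one must be careful with the bookkeeping of indices near the boundary $k\leq i$, but no convergence issues arise since everything is formal.

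With the identity in hand, I would use the factorization $(1-t^n)^{d+1}=(1-t)^{d+1}(1+t+\cdots+t^{n-1})^{d+1}$, so that
\begin{equation*}
h(t)(1+t+\cdots+t^{n-1})^{d+1}=(1-t^n)^{d+1}A(t)=\sum_{k=0}^{d+1}\binom{d+1}{k}(-1)^k\,t^{nk}A(t).
\end{equation*}
Since $(1-t^n)^{d+1}$ is a polynomial in $t^n$, applying $V_n$ term by term and invoking the commutation identity pulls the factor $(1-t)^{d+1}$ outside, yielding $V_n\!\big((1-t^n)^{d+1}A(t)\big)=(1-t)^{d+1}V_n(A(t))=U_nh(t)$. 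On the other hand, applying $V_n$ directly to $\sum_j c_jt^j=h(t)(1+t+\cdots+t^{n-1})^{d+1}$ gives $\sum_i c_{ni}t^i$ by definition of $V_n$. Comparing the two expressions forces $\delta_i=c_{ni}$ for each $0\leq i\leq d$, as claimed. (As a consistency check, one could alternatively expand both $\delta_i$ and $c_{ni}$ explicitly and verify that each equals $\sum_{k}\binom{d+1}{k}(-1)^k g(n(i-k))$, but the operator formulation avoids grinding through these sums and makes the interaction with the factor $(1-t^n)^{d+1}$ transparent.)
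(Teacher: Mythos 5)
Your proof is correct. The paper states this lemma without proof, citing Beck--Stapledon; your argument --- writing $h(t)(1+t+\cdots+t^{n-1})^{d+1}=(1-t^n)^{d+1}\,h(t)/(1-t)^{d+1}$ and commuting the decimation operator $V_n$ past powers of $t^n$ so that $(1-t^n)^{d+1}$ becomes $(1-t)^{d+1}$ --- is precisely the standard proof of the cited result, so there is nothing to fix.
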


Moreover, we also recall the following fundamental assertion for log-concave sequences. 
\begin{Lemma}\label{kihon}
Let $b_1,b_2,\ldots$ be a (resp. strictly) log-concave sequence of nonnegative real numbers such that 
$b_k=b_{k+1}=\cdots=0$ if $b_k=0$ for some $k$. 
Then we have $b_ib_j \geq b_{i-m}b_{j+m}$ (resp. $b_ib_j > b_{i-m}b_{i+m}$) for any $i \leq j$ and $m \geq 0$. 
\end{Lemma}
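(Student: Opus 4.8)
The plan is to reduce everything to the single fact that log-concavity of a \emph{positive} sequence is equivalent to the monotonicity of its consecutive ratios, and then to telescope. First I would dispose of the trivial and degenerate cases. Adopt the convention $b_k=0$ for $k\le 0$, so that the asserted inequalities make sense for all $i\ge 1$ and $m\ge 0$. For $m=0$ there is nothing to prove, so fix $i\le j$ and $m\ge 1$. If $b_{i-m}=0$ or $b_{j+m}=0$, then the right-hand side $b_{i-m}b_{j+m}$ vanishes while the left-hand side is nonnegative, so the (non-strict) inequality is immediate.

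The remaining, and only substantive, case is $b_{i-m}>0$ and $b_{j+m}>0$. Here the hypothesis that the support is an initial segment (``$b_k=0$ forces $b_{k+1}=\cdots=0$'') does the real work: since $b_{j+m}>0$ and $j+m$ is the largest index occurring, it forces $b_1,\ldots,b_{j+m}$ all to be strictly positive. In particular every term appearing in $b_ib_j\ge b_{i-m}b_{j+m}$ is positive, and I may freely form the ratios $r_k:=b_k/b_{k-1}$ for $1\le k\le j+m$. Rewriting log-concavity $b_k^2\ge b_{k-1}b_{k+1}$ as $r_k\ge r_{k+1}$ shows that $(r_k)$ is non-increasing.

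Now I would simply telescope. Writing
\[
\frac{b_ib_j}{b_{i-m}b_{j+m}}=\frac{b_i/b_{i-m}}{b_{j+m}/b_j}=\prod_{\ell=1}^{m}\frac{r_{\,i-m+\ell}}{r_{\,j+\ell}},
\]
each paired index satisfies $i-m+\ell\le j+\ell$ (because $i\le j$), so the monotonicity of $(r_k)$ gives $r_{i-m+\ell}\ge r_{j+\ell}$ for every $\ell$; hence the product is at least $1$ and $b_ib_j\ge b_{i-m}b_{j+m}$. For the strict statement I would run the same computation under strict log-concavity, i.e.\ $r_k>r_{k+1}$ on the support: since $m\ge 1$ and $i\le j$ yield $i-m+\ell<j+\ell$ for every $\ell$, each factor exceeds $1$ and the product is strictly greater than $1$, giving $b_ib_j>b_{i-m}b_{j+m}$ whenever the right-hand side is positive.

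The only delicate point, and the one I would watch most carefully, is the interaction with the zeros; this is precisely why the initial-segment hypothesis is imposed. It is what guarantees that positivity of the two outer terms $b_{i-m}$ and $b_{j+m}$ propagates to positivity of all intermediate terms, so that the ratio telescoping is legitimate. For the strict version one should additionally note that the inequality can only be claimed when the right-hand side is nonzero (otherwise both sides may vanish), which is exactly the regime in which strict log-concavity is meaningful on the support; I would make this restriction explicit rather than assert the strict bound across the degenerate cases.
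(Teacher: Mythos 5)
Your proof is correct and takes essentially the same route as the paper: both arguments rest on the observation that, on the positive initial segment of the support, log-concavity makes the consecutive ratios $b_k/b_{k-1}$ non-increasing --- the paper proves the single-step inequality $b_ib_j \geq b_{i-1}b_{j+1}$ via the chain $b_i \geq b_{i-1}b_{i+1}/b_i \geq b_{i-1}b_{i+2}/b_{i+1} \geq \cdots \geq b_{i-1}b_{j+1}/b_j$ (which is exactly $r_i \geq r_{j+1}$) and iterates, while you telescope all $m$ steps in one product, with the same use of the zero-propagation hypothesis to dispose of degenerate cases. Your explicit caveat that the strict inequality can only be asserted when the right-hand side is nonzero is a sound refinement of the paper's terse ``the strictly log-concave case is similar.''
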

\begin{proof}
It suffices to show that $b_ib_j \geq b_{i-1}b_{j+1}$ for every $i \leq j$. 
When $b_i=0$ or $b_j=0$, one has $b_{j+1}=0$. Thus the equality holds. 

Assume that $b_i > 0$ and $b_j > 0$. Thus, in particular, 
we have $b_k > 0$ for each $i \leq k \leq j$. 
Since $b_i^2 \geq b_{i-1}b_{i+1}$ and $b_i>0$, one has $b_i \geq b_{i-1}b_{i+1}/b_i$. 
Similarly, since $b_{i+1} \geq b_ib_{i+2}/b_{i+1}$, we obtain 
$b_i \geq b_{i-1}/b_i \cdot b_ib_{i+2}/b_{i+1}=b_{i-1}b_{i+2}/b_{i+1}.$ By repeating this computation, we obtain 
$$b_i \geq b_{i-1}b_{i+1}/b_i \geq b_{i-1}b_{i+2}/b_{i+1} \geq \cdots \geq b_{i-1}b_{j+1}/b_j.$$ 
Hence, $b_ib_j \geq b_{i-1}b_{j+1}$ holds. The case of strictly log-concave sequences is similar. 
\end{proof}

For a sequence of numbers $b_0,b_1,\ldots,$ let $I(b_n)$ denote the index of $b_n$, i.e., $I(b_n)=n$.

\begin{proof}[Proof of Theorem \ref{main1}]
Our goal is to show that $\delta_0,\delta_1,\ldots,\delta_d$ satisfy the following inequalities: 
\begin{enumerate}
\item[(i)] $\delta_i^2 > \delta_{i-1}\delta_{i+1}$ for $1 \leq i \leq d-1$; 
\item[(ii-a)] $\delta_i \leq \delta_{d-i}$ for $1 \leq i \leq \lfr (d-1)/2 \rfr$ and 
$\delta_i < \delta_{d-i}$ for $0 \leq i \leq \lfr (d-1)/2 \rfr$ if $h_s > 1$; 
\item[(ii-b)] $\delta_{d+1-i} < \delta_i$ for $1 \leq i \leq \lfr d/2 \rfr$. 
\end{enumerate}
Let $a_0,a_1,\ldots,a_{(d+1)(n-1)}$ be the integers such that 
$\sum_{i=0}^{(d+1)(n-1)}a_it^i = (1+t+\cdots+t^{n-1})^{d+1}$. By Lemma \ref{Veronese}, we have 
$$\delta_0=1 \text{ and }\delta_i=\sum_{j=0}^s h_j a_{ni-j} \text{ for } 1 \leq i \leq d,$$ 
where we let $a_k=0$ if $k > (d+1)(n-1)$. 

Since the coefficients of the polynomial $(1+t+ \cdots +t^{n-1})^2$ are symmetric and strictly log-concave, 
so are the coefficients of the polynomial $\sum_{i=0}^{(d+1)(n-1)}a_it^i=(1+t+\cdots+t^{n-1})^{d+1}$. 
(See \cite[Proposition 1 and Proposition 2]{StanleyLC}.) In particular, we have 
\begin{align}\label{syma}
&a_i = a_{(d+1)(n-1)-i} \;\text{ for }\; 0 \leq i \leq (d+1)(n-1), \\
\label{lca}
&a_i^2 > a_{i-1}a_{i+1} \; \text{ for } \; 1 \leq i \leq (d+1)(n-1)-1, \text{ and }\\
\label{unia}
&a_i > a_{i-1} \; \text{ for } \; 1 \leq i \leq \lfr (d+1)(n-1)/2 \rfr. 
\end{align}

%%%%%%%%%%%%%%%%%%%%%%%%% ( i )  log-concavity %%%%%%%%%%%%%%%%%%%%%%%%%%%%%%%%%

\noindent
(i) We can compute as follows: 
\begin{align*}
&\delta_i^2-\delta_{i-1}\delta_{i+1} \\
&=\left(\sum_{j=0}^s h_ja_{ni-j}\right)^2 - \left(\sum_{j=0}^s h_ja_{n(i-1)-j}\right)\left(\sum_{j=0}^s h_ja_{n(i+1)-j}\right) \\
&=\sum_{j=0}^s h_j^2(a_{ni-j}^2 - a_{n(i-1)-j}a_{n(i+1)-j}) \\
&+\sum_{0 \leq p < q \leq s}h_ph_q
\left( (a_{ni-p}a_{ni-q} - a_{n(i-1)-p}a_{n(i+1)-q}) + (a_{ni-p}a_{ni-q} - a_{n(i-1)-q}a_{n(i+1)-p}) \right). 
\end{align*}
By \eqref{lca} and Lemma \ref{kihon}, we immediately obtain 
$$a_{ni-j}^2-a_{ni-j-n}a_{ni-j+n} > 0 \text{ and }a_{ni-q}a_{ni-p} - a_{ni-q-n}a_{ni-p+n} > 0.$$ 
Moreover, since $n \geq s$, we have $I(a_{ni-q})-I(a_{n(i-1)-p})=I(a_{n(i+1)-q})-I(a_{ni-p})
=n-q+p \geq n-s \geq 0$. 
Thus we also obtain $$a_{ni-q}a_{ni-p} - a_{n(i-1)-p}a_{n(i+1)-q} > 0$$ by \eqref{lca} and Lemma \ref{kihon}. 
From the nonnegativity of $h_i$ together with $h_0=1$, 
we conclude that $\delta_i^2 - \delta_{i-1}\delta_{i+1} \geq h_0^2 (a_{ni}^2-a_{n(i-1)}a_{n(i+1)})> 0$, as required.

\smallskip

%%%%%%%%%%%%%%%%%%%%%%%%% ( ii )  alt-inc. (Stanley) %%%%%%%%%%%%%%%%%%%%%%%%%%%%%%%%%

\noindent
(ii-a) Fix $1 \leq i \leq \lfr (d-1)/2 \rfr$. In the sequel, we will prove $\delta_{d-i} - \delta_i \geq 0$ 
and the strictly one holds if $h_s > 1$. Note that $\delta_d \geq h_s > 1 = \delta_0$ for $n \geq \max\{s,d+1-s\}$. 
%Since the assertion is trivial when $s \leq 1$, we assume that $s \geq 2$. 

Let $k_0,\ldots,k_\ell$ be the indices such that 
$\sum_{r=0}^d h_rt^r=h_{k_0}+h_{k_1}t^{k_1}+\cdots + h_{k_\ell} t^{k_\ell}$, 
where $0 = k_0 < k_1 < \cdots < k_\ell = s$ and $h_{k_j}>0$ for each $0 \leq j \leq \ell$. 
For $0 \leq j < \ell$, if $h_{k_0}+\cdots+h_{k_j} \leq h_{k_\ell}=h_s$, then we set $m(j)=\ell$; otherwise 
let $m(j)$ be a unique integer with $0 \leq m(j) < \ell$ such that 
$$h_{k_\ell} + \cdots + h_{k_{m(j)+1}}<h_{k_0} + \cdots + h_{k_j} \leq h_{k_\ell} + \cdots + h_{k_{m(j)}}.$$
Clearly, $m(j-1) \geq m(j)$. Moreover, we have 
\begin{align}\label{jouken}
k_j+k_{m(j)} \geq s. 
\end{align}
In fact, $h_{k_\ell} + \cdots + h_{k_\ell - k_j} \geq h_{k_0} + \cdots + h_{k_j}$ should be satisfied by \eqref{st}, 
while $h_{k_\ell} + \cdots + h_{k_{m(j)+1}}=h_{k_\ell} + \cdots + h_{k_{m(j)}+1} < h_{k_0} + \cdots + h_{k_j}$ holds. 
Thus, we have $k_{m(j)}+1 > k_\ell - k_j,$ i.e., $k_j+k_{m(j)} \geq k_\ell=s$.

Let $t=\max\{j : j \leq m(j)\}$. In particular, we have 
$$0=k_0 < k_1 < \cdots < k_t \leq k_{m(t)} \leq k_{m(t-1)} \leq \cdots \leq k_{m(0)}=k_\ell.$$ 
Then we see that $m(t)=t$ or $m(t)=t+1$. In fact, on the contrary, suppose that $m(t) \geq t+2$. Since $m(t+1) < t+1$, we have 
\begin{align*}
h_{k_0}+\cdots+h_{k_t}+h_{k_{t+1}} &> h_{k_\ell}+\cdots+h_{k_{m(t)}}+\cdots+h_{k_{t+2}}+h_{k_{t+1}}+\cdots + h_{k_{m(t+1)+1}} \\
&\geq h_{k_\ell}+\cdots+h_{k_{m(t)}} + h_{k_{t+1}} \geq h_{k_0}+\cdots+h_{k_t} + h_{k_{t+1}}, 
\end{align*}
a contradiction.

For each $0 \leq p,q \leq \ell$, let $A(p,q)=(a_{n(d-i)-k_p}-a_{ni-k_q})+(a_{n(d-i)-k_q}-a_{ni-k_p})$. 
Then we have $A(j,m(j)) \geq 0$ for $0 \leq j \leq \ell$. 
In fact, by \eqref{syma}, we have $a_{n(d-i)-u}=a_{n(i+1)-d-1+u}$. Thus 
\begin{align*}
I(a_{n(i+1)-d-1+k_j})-I(a_{ni-k_{m(j)}})&=I(a_{n(i+1)-d-1+k_{m(j)}})-I(a_{ni-k_j})\\
&=n-d-1+k_j+k_{m(j)} \geq n-(d+1-s) \geq 0 
\end{align*}
by \eqref{jouken}. When $1 \leq i \leq (d-2)/2$, one has 
\begin{align*}
&\lfr (d+1)(n-1)/2 \rfr - \max\{I(a_{n(i+1)-d-1+k_j}),I(a_{n(i+1)-d-1+k_{m(j)}})\} \\
&\geq ((d+1)(n-1)-1)/2 - (nd/2 - d - 1) - \max\{k_j,k_{m(j)}\} \geq (n+d-2s)/2 \geq 0. 
\end{align*}
When $i=(d-1)/2$, if $k_j \geq (d+1)/2$, then one has 
\begin{align*}
\lfr (d+1)(n-1)/2 \rfr - I(a_{n(d+1)/2-k_j}) \geq k_j - (d+1)/2 \geq 0, 
\end{align*}
and if $k_j \leq (d+1)/2$, since $a_{n(d+1)/2-k_j}=a_{(n-2)(d+1)/2+k_j}$ by \eqref{syma}, one also has \begin{align*}
\lfr (d+1)(n-1)/2 \rfr - I(a_{(n-2)(d+1)/2+k_j}) \geq (d+1)/2 - k_j \geq 0. 
\end{align*}
Similarly, one also has 
$$\lfr (d+1)(n-1)/2 \rfr \geq I(a_{n(d+1)/2-k_{m(j)}}) \text{ or }\lfr (d+1)(n-1)/2 \rfr \geq I(a_{(n-2)(d+1)/2+k_{m(j)}}).$$ 
Therefore, by \eqref{unia}, we conclude that $A(j,m(j)) \geq 0$ for each $0 \leq j \leq t$. 
In the same way, we also conclude that $A(j,r) \geq 0$ for each $m(j) \leq r \leq m(j-1)$. 

Moreover, for $1 \leq j \leq m(t)$, we define $f_j$ by setting 
\begin{align*}
f_j=
\begin{cases}
\left(\sum_{r=m(j-1)}^\ell h_{k_r}-\sum_{r=0}^{j-1}h_{k_r} \right) 
A(j,m(j-1)) +\sum_{r=m(j)+1}^{m(j-1)-1}h_{k_r}A(j,r) \\
\quad+\left(\sum_{r=0}^j h_{k_r}-\sum_{r=m(j)+1}^\ell h_{k_r} \right)A(j,m(j)), \text{ if } m(j)<m(j-1), \\
h_{k_j}A(j,m(j)), \text{ if } m(j)=m(j-1) 
\end{cases}
\end{align*}
for $1 \leq j \leq m(t)-1$, and 
\begin{align*}
f_t=
\begin{cases}
\left(\sum_{r=m(t-1)}^\ell h_{k_r}-\sum_{r=0}^{t-1}h_{k_r} \right) A(t,m(t-1)) +\sum_{r=m(t)+1}^{m(t-1)-1}h_{k_r}A(t,r) \\
\quad+\left(\sum_{r=0}^t h_{k_r}-\sum_{r=m(t)+1}^\ell h_{k_r} \right)A(t,t)/2, \text{ if }m(t)<m(t-1), \\
%(a_{n(i+1)-d-1+k_t}-a_{ni-k_{m(t)}}), \text{ if }m(t)<m(t-1), \\
\left(\sum_{r=m(t-1)}^\ell h_{k_r} - \sum_{r=0}^{t-1}h_{k_r}\right)A(t,t)/2, \text{ if }m(t)=m(t-1) 
%(a_{n(i+1)-d-1+k_t}-a_{ni-k_t}), \text{ if }m(t)=m(t-1) 
\end{cases}
\end{align*}
when $m(t)=t$ and 
\begin{align*}
f_{t+1}=\left(\sum_{r=m(t)}^\ell h_{k_r} - \sum_{r=0}^th_{k_r}\right)A(t+1,t+1)/2
\end{align*}
when $m(t)=t+1$. %Note that $m(t+1)=t$ if $t+1=m(t)<m(t-1)$. 
By definition of $m(j)$ together with the nonnegativity of each $A(j,r)$ for $m(j) \leq r \leq m(j-1)$, 
we obtain that $f_j \geq 0$ for each $j$.

By using these notation, we can compute as follows: 
\begin{align*}
\delta_{d-i}-\delta_i=\sum_{j=0}^\ell h_{k_j}a_{n(d-i)-k_j} - \sum_{j=0}^\ell h_{k_j}a_{ni-k_j} 
=A(0,\ell)+\sum_{j=1}^{m(t)}f_j \geq 0. 
\end{align*}
Furthermore, if $h_s >1$, then we have 
\begin{align*}
&f_1 \geq \left(\sum_{r=m(0)}^\ell h_{k_r} - h_{k_0} \right)A(1,m(0)) \geq h_s -1 >0 \text{ when }m(1) < m(0)=\ell, \\
&f_1 = h_{k_1} A(1,m(1)) = h_{k_1} A(1,m(0)) \geq h_{k_1} > 0 \text{ when }m(1)=m(0). 
\end{align*}
Therefore, we also obtain that $\delta_{d-i} > \delta_i$ if $h_s>1$. 

\smallskip

%%%%%%%%%%%%%%%%%%%%%%%%% ( iii )  alt-inc. (Hibi) %%%%%%%%%%%%%%%%%%%%%%%%%%%%%%%%%

\noindent
(ii-b) Fix $1 \leq i \leq \lfr d/2 \rfr$. In the sequel, we will prove $\delta_i - \delta_{d+1-i} > 0$. 
We employ the similar technique to the above (ii-a).

Let $k_0,\ldots,k_\ell$ be the same things as above. 
%$\sum_{r=0}^d h_rt^r=h_{k_0}+h_{k_1}t^{k_1}+\cdots + h_{k_\ell} t^{k_\ell}$, 
%where $0 = k_0 < k_1 < \cdots < k_\ell = s$ and $h_{k_j}>0$ for each $0 \leq j \leq \ell$. 
For $0 < j \leq \ell$, if $h_{k_1} \geq h_{k_\ell} + \cdots + h_{k_j}$, then we set $n(j)=1$; otherwise 
let $n(j)$ be a unique integer with $1 < n(j) \leq \ell$ such that 
$$h_{k_1}+\cdots + h_{k_{n(j)-1}} < h_{k_\ell} + \cdots + h_{k_j} \leq h_{k_1} + \cdots + h_{k_{n(j)}}.$$
Clearly, $n(j-1) \geq n(j)$. Moreover, we have 
\begin{align}\label{jouken2}
k_j+k_{n(j)} \leq d+1. 
\end{align}
In fact, $$h_{k_1} + \cdots + h_{d+1 - k_j}=h_1+\cdots+h_{d+1-k_j} \geq h_d+\cdots+h_{k_j} = h_{k_\ell} + \cdots + h_{k_j}$$ 
should be satisfied by \eqref{hi}, 
while $h_{k_1} + \cdots + h_{k_{n(j)-1}}=h_1 + \cdots + h_{k_{n(j)}-1} < h_{k_\ell} + \cdots + h_{k_j}$ holds. 
Thus, we have $k_{n(j)}-1 < d+1-k_j,$ i.e., $k_j+k_{n(j)} \leq d+1$.

Let $t'=\min\{j : j \geq n(j)\}$. In particular, we have 
$$1 \leq k_{n(\ell)} \leq k_{n(\ell-1)} \leq \cdots \leq k_{n(t')} \leq k_{t'} < k_{t'+1} < \cdots < k_\ell.$$ 
Then $n(t')=t'$ or $n(t')=t'-1$. In fact, on the contrary, suppose that $n(t') \leq t'-2$. Since $n(t'-1) > t'-1$, we have 
\begin{align*}
h_{k_\ell}+\cdots+h_{k_{t'-1}} &> h_{k_1}+\cdots+h_{k_{n(t')}}+\cdots+h_{k_{t'-2}}+h_{k_{t'-1}}+\cdots+h_{k_{n(t'-1)-1}} \\
&\geq h_{k_1}+\cdots+h_{k_{n(t')}}+h_{k_{t'-1}} \geq h_{k_\ell}+\cdots+h_{k_{t'}} + h_{k_{t'-1}}, 
\end{align*}
a contradiction.

For each $0 \leq p,q \leq \ell$, let $B(p,q)=(a_{ni-k_p}-a_{n(d+1-i)-k_q})+(a_{ni-k_q}-a_{n(d+1-i)-k_p})$. 
Then we have $B(j,n(j)) \geq 0$. In fact, by \eqref{syma}, we have $a_{n(d+1-i)-u}=a_{ni-d-1+u}$. Thus 
\begin{align*}
I(a_{ni-k_j})-I(a_{ni-d-1+k_{n(j)}})=I(a_{ni-k_{n(j)}})-I(a_{ni-d-1+k_j})=d+1-(k_j+k_{n(j)}) \geq 0 
\end{align*}
by \eqref{jouken2}. When $1 \leq i \leq (d-1)/2$, one has 
\begin{align*}
&\lfr (d+1)(n-1)/2 \rfr - \max\{I(a_{ni-k_j}),I(a_{ni-k_{n(j)}})\} \\
&\geq ((d+1)(n-1)-1)/2 - n(d-1)/2 + \min\{k_j,k_{n(j)}\} \geq n-d/2 > 0 
\end{align*}
by $n \geq (d+1)/2$. When $i=d/2$, if $n/2 + k_j \geq d/2+1$, then one has \begin{align*}
\lfr (d+1)(n-1)/2 \rfr - I(a_{nd/2-k_j}) \geq (n-d-2)/2 + k_j \geq 0, 
\end{align*}
and if $n/2 + k_j \leq d/2$, since $a_{nd/2-u}=a_{n(d/2+1)-d-1+u}$ by \eqref{syma}, one also has 
\begin{align*}
\lfr (d+1)(n-1)/2 \rfr - I(a_{n(d/2+1)-d-1+k_j}) \geq (d-n)/2 - k_j \geq 0. 
\end{align*}
Similarly, one also has 
$$\lfr (d+1)(n-1)/2 \rfr \geq I(a_{nd/2-k_{n(j)}}) \text{ or }\lfr (d+1)(n-1)/2 \rfr \geq I(a_{n(d/2+1)-d-1+k_{n(j)}}).$$ 
Therefore, by \eqref{unia}, we conclude that $B(j,n(j)) \geq 0$ for each $1 \leq j \leq \ell$. 
In the same way, we also conclude that $B(j,r) \geq 0$ for $n(j+1) \leq r \leq n(j)$. 

Moreover, for $n(t') \leq j \leq \ell$, we define $g_j$ by setting 
\begin{align*}
g_j=
\begin{cases}
\left(\sum_{r=1}^{n(j+1)} h_{k_r}-\sum_{r=j+1}^\ell h_{k_r} \right) 
B(j,n(j+1)) +\sum_{r=n(j+1)+1}^{n(j)-1}h_{k_r}B(j,r) \\
\quad+\left(\sum_{r=j}^\ell h_{k_r}-\sum_{r=1}^{n(j)-1} h_{k_r} \right)B(j,n(j)), \text{ if } n(j+1)<n(j), \\
h_{k_j}B(j,n(j)), \text{ if } n(j+1)=n(j), 
\end{cases}
\end{align*}
for $n(t')+1 \leq j \leq \ell$, where we let $n(\ell+1)=0$, and 
\begin{align*}
g_{t'}=
\begin{cases}
\left(\sum_{r=1}^{n(t'+1)} h_{k_r}-\sum_{r=t'+1}^\ell h_{k_r} \right) B(t',n(t'+1)) 
+\sum_{r=n(t'+1)+1}^{n(t')-1}h_{k_r}B(t',r) \\
\quad+\left(\sum_{r=t'}^\ell h_{k_r}-\sum_{r=1}^{n(t')-1} h_{k_r} \right)B(t',t')/2, \text{ if }n(t'+1)<n(t'), \\
%(a_{n(i+1)-d-1+k_t}-a_{ni-k_{n(t)}}), \text{ if }n(t)<n(t-1), \\
\left(\sum_{r=1}^{n(t')} h_r - \sum_{r=t'+1}^\ell h_r\right)B(t',t')/2, \text{ if }n(t'+1)=n(t') 
%(a_{n(i+1)-d-1+k_t}-a_{ni-k_t}), \text{ if }n(t)=n(t-1) 
\end{cases}
\end{align*}
when $n(t')=t'$ and 
\begin{align*}
g_{t'-1}=\left(\sum_{r=1}^{n(t')} h_r - \sum_{r=t'}^\ell h_r\right)B(t'-1,t'-1)/2
\end{align*}
when $n(t')=t'-1$. %Note that $n(t-1)=t$ if $t+1=n(t)>n(t+1)$. 
By definition of $n(j)$ together with the nonnegativity of each $B(j,r)$ for $n(j+1) \leq r \leq n(j)$, 
we obtain that $g_j \geq 0$ for each $j$.

By using these notation, we can compute as follows: 
\begin{align*}
\delta_i-\delta_{d+1-i}=\sum_{j=0}^\ell h_{k_j}a_{ni-k_j} - \sum_{j=0}^\ell h_{k_j}a_{n(d+1-i)-k_j} 
=B(0,0)/2+\sum_{j=n(t')}^\ell g_j. 
\end{align*}
Since $B(0,0)>0$ and $g_j \geq 0$ for each $j$, we have $\delta_i-\delta_{d+1-i} > 0$, as required. 
\end{proof}

\section{Several examples of $\delta$-vectors concerning unimodality}

The goal of this section is to provide several kinds of $\delta$-vectors. 
Those concern unimodality, log-concavity and alternatingly increasingness.

\begin{Remark}\label{chuui}{\em 
(a) Let $\Pc \subset \RR^N$ be a lattice polytope 
and $(\delta_0,\delta_1,\ldots,\delta_d)$ its $\delta$-vector. 
If $\Pc$ has IDP, then one has $\delta_1^2 \geq \delta_0\delta_2$.

In fact, let $\delta_1=\ell$. Then $|\Pc \cap \ZZ^N|=\ell+d+1$. 
If $\ell=0$, then we do not have to say anything from \cite[Lemma 3.1]{SchLan}. 
Assume $\ell > 0$. From $i(\Pc,m)=\sum_{i=0}^d \delta_i\binom{m+d-i}{d}$, 
we also see that $|2\Pc \cap \ZZ^N|=\binom{d+2}{2}+(d+1)\ell+\delta_2$. 
Since $\Pc$ has IDP, we have $|m\Pc \cap \ZZ^N| \leq \binom{\ell+d+m}{m}$. 
In particular, $|2\Pc \cap \ZZ^N| \leq \binom{\ell+d+2}{2}$. 
Hence $\delta_2 \leq \binom{\ell+d+2}{2}-\binom{d+2}{2}-(d+1)\ell=(\ell^2+\ell)/2$. Therefore, 
$\delta_1^2-\delta_0\delta_2=\ell^2-\delta_2 \geq \ell^2-(\ell^2+\ell)/2=\ell(\ell-1)/2$. 
This is always nonnegative by $\ell >0$, as required.

Moreover, one has $\delta_2 \geq \delta_1$ by \cite{HibiLBT}. 
Note that $\delta_1 \geq \delta_d$ always holds. (See Introduction.) Thus, we have $\delta_2^2 \geq \delta_1 \delta_d$. 
Hence, we obtain that all $\delta$-vectors of lattice polytopes of dimension at most 3 having IDP are always log-concave. 

\noindent
(b) The $\delta$-vectors of lattice polytopes of dimension at most 4 with at least one interior lattice point 
are always alternatingly increasing. In particular, it is unimodal. 
See \cite[Proposition 3.2]{SchLan}. 
%In fact, let $\Pc \subset \RR^N$ be a lattice polytope of dimension d and $\delta(\Pc)=(\delta_0,\delta_1,\ldots,\delta_d)$ 
%its $\delta$-vector with $|(\Pc \setminus \partial \Pc) \cap \ZZ^N|=\delta_d \geq 1$. 
%Then we always have $1=\delta_0 \leq \delta_d \leq \delta_1 \leq \delta_{d-1} \leq \delta_2$. 
%Note that $\delta_1 \leq \delta_d$ is mentioned in Introduction; 
%$\delta_1 \leq \delta_{d-1}$ follows from \cite[Theorem 1.1]{HibiLBT}; 
%$\delta_{d-1} \leq \delta_2$ follows from \cite[Remark 1.4 (c)]{HibiLBT}. 
}\end{Remark}

Before providing examples, 
we recall the well-known combinatorial technique how to compute 
the $\delta$-vector of a lattice simplex. 
Given a lattice simplex $\Fc \subset \RR ^N$ of dimension $d$ with the vertices 
$v_0, v_1, \ldots, v_d \in \ZZ^N$, we set 
\begin{eqnarray*}
\Lambda_\Fc=\left\{ \alpha \in \ZZ^{N+1} : \alpha = \sum_{i=0}^d r_i(v_i,1), \; 0 \leq r_i < 1 \right\}. 
\end{eqnarray*}
We define the degree of $\alpha=\sum_{i=0}^{d}r_i(v_i,1) \in \Lambda_\Fc$ to be $\deg(\alpha)=\sum_{i=0}^d r_i$, 
i.e., the last coordinate of $\alpha$. Then we have the following: 
\begin{Lemma}[cf. {\cite[Corollary 3.11]{BeckRobins}}]\label{compute}
Let $\delta(\Fc)=(\delta_0,\delta_1,\ldots,\delta_d).$ Then, for each $0 \leq i \leq d$, 
$$\delta_i = |\{ \alpha \in \Lambda_\Fc : \deg(\alpha)=i\}|.$$ 
\end{Lemma}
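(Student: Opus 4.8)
The plan is to prove Lemma~\ref{compute} by setting up the standard correspondence between lattice points in dilates of the simplex $\Fc$ (together with their ``height'' coordinate) and the fundamental parallelepiped data encoded in $\Lambda_\Fc$. The key idea is to work in the cone over $\Fc$ inside $\RR^{N+1}$. Concretely, I would consider the cone
\[
C_\Fc = \left\{ \sum_{i=0}^d \lambda_i (v_i,1) : \lambda_i \geq 0 \right\} \subset \RR^{N+1},
\]
whose lattice points at height $m$ (i.e.\ with last coordinate equal to $m$) are in bijection with $m\Fc \cap \ZZ^N$. This reduces the computation of $i(\Fc,m)$ to counting lattice points of $C_\Fc$ at each height.

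The central step is the half-open decomposition of the cone. Because $v_0,\dots,v_d$ are the vertices of a simplex, every lattice point $\beta \in C_\Fc \cap \ZZ^{N+1}$ can be written uniquely as
\[
\beta = \alpha + \sum_{i=0}^d n_i (v_i,1), \qquad \alpha \in \Lambda_\Fc,\ n_i \in \ZZ_{\geq 0},
\]
by taking $\alpha$ to be the fractional part of the (unique) expansion of $\beta$ in the basis $\{(v_i,1)\}$. First I would verify existence and uniqueness of this decomposition, which follows from the fact that $\{(v_i,1)\}$ is an $\RR$-basis of $\RR^{N+1}$, so the coefficients $r_i \in [0,1)$ of $\alpha$ and the integer parts $n_i$ are determined. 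This gives the generating-function identity
\[
\sum_{\beta \in C_\Fc \cap \ZZ^{N+1}} t^{\,\mathrm{ht}(\beta)}
= \left(\sum_{\alpha \in \Lambda_\Fc} t^{\deg(\alpha)}\right) \cdot \prod_{i=0}^d \frac{1}{1-t}
= \frac{\sum_{\alpha \in \Lambda_\Fc} t^{\deg(\alpha)}}{(1-t)^{d+1}},
\]
where $\mathrm{ht}$ denotes the last coordinate and I use that each $(v_i,1)$ contributes height $1$. Grouping the left-hand side by height and recalling that the number of height-$m$ points equals $i(\Fc,m)$ (with the height-$0$ term contributing the constant $1$) identifies the left side with $\sum_{m\geq 0} i(\Fc,m)\, t^m$.

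Comparing this with the defining formula $(1-t)^{d+1}\bigl(1+\sum_{m\geq1} i(\Fc,m)t^m\bigr)=\sum_i \delta_i t^i$ from the introduction, I get
\[
\sum_{i=0}^d \delta_i t^i = \sum_{\alpha \in \Lambda_\Fc} t^{\deg(\alpha)},
\]
and reading off the coefficient of $t^i$ yields $\delta_i = |\{\alpha \in \Lambda_\Fc : \deg(\alpha) = i\}|$, as claimed. To finish cleanly I would note that $\deg(\alpha)=\sum r_i$ is an integer in $\{0,1,\dots,d\}$ for every $\alpha \in \Lambda_\Fc$ (since the last coordinate of $\alpha$ is an integer and $0 \le \sum r_i < d+1$), so the right-hand sum really is a polynomial of degree at most $d$ matching the $\delta$-polynomial.

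I expect the main obstacle to be the careful justification of the unique half-open decomposition, in particular confirming that $\alpha$ lies in the lattice $\ZZ^{N+1}$ (not merely in the real span) and that its degree is an integer — this is where the ``$+1$'' embedding and the homogenization are doing the essential work. The rest is a routine comparison of power series, and the cleanest route is simply to cite \cite[Corollary 3.11]{BeckRobins} for the standard form of this decomposition while supplying the short bijective argument above.
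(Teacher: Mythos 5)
Your argument is correct and coincides with the standard proof of this lemma, which the paper itself does not reprove but simply cites from \cite[Corollary 3.11]{BeckRobins}: the unique half-open decomposition of lattice points of the cone over $\Fc$ and the resulting generating-function identity $\sum_{m\geq 0} i(\Fc,m)t^m = \bigl(\sum_{\alpha\in\Lambda_\Fc} t^{\deg(\alpha)}\bigr)/(1-t)^{d+1}$ are exactly the cited argument. One small wording fix: when $N>d$ the vectors $(v_0,1),\ldots,(v_d,1)$ are merely linearly independent rather than an $\RR$-basis of $\RR^{N+1}$, but since every point of the cone $C_\Fc$ lies in their span, the uniqueness of the expansion --- and hence your decomposition and the integrality of $\alpha=\beta-\sum_i n_i(v_i,1)$ --- goes through unchanged.
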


\subsection{Non-unimodal $\delta$-vectors}

First, we construct lattice polytopes which contain $m$ interior lattice points 
whose $\delta$-vectors are not unimodal. 
By Remark \ref{chuui} (b), if a lattice polytope has a non-unimodal $\delta$-vector, 
then its dimension is at least 5. 

Let $\eb_1,\ldots,\eb_d$ be the unit coordinate vectors of $\RR^d$ and ${\bf 0}$ its origin.

\begin{Example}\label{main2}{\em 
Let $d \geq 5$ and $m \geq 1$ be integers. 
Then there exists a lattice polytope of dimension $d$ containing exactly $m$ interior lattice points 
such that its $\delta$-vector is not unimodal.

\noindent
\underline{The case $d$ is odd}: Let $d=2 \ell +1$, where $\ell \geq 2$. 
We define $\Pc_\text{odd}(\ell,m)$ by setting the convex hull of $v_0,v_1,\ldots,v_d$, where $M=2(2m+1)(\ell+1)$ and 
\begin{eqnarray*}
&&v_i=
\begin{cases}
{\bf 0}, \quad &i=0, \\
\eb_i, \quad &i=1,\ldots,d-1,
\end{cases}\\
&&v_d=(M-2(\ell+1)m)\eb_1+(M-1)(\eb_2+\cdots+\eb_{d-1})+M\eb_d. 
\end{eqnarray*}
Let $\delta(\Pc_\text{odd}(\ell,m))=(\delta_0,\delta_1,\ldots,\delta_d)$. 
Then we can calculate from Lemma \ref{compute} that 
$$\delta_i=\left| \left\{ j \in \ZZ : 
\lcl \frac{2 \ell j}{M}+\left\{ \frac{2(\ell+1)mj}{M}\right\} \rcl = i, \; 0 \leq j \leq M-1 \right\} \right|,$$ 
where $\{r\}$ denotes the fraction part of a rational number $r$, i.e., $\{r\}=r - \lfr r \rfr$. 
Let $f(j)=\lcl \frac{2 \ell j}{M}+\left\{ \frac{2(\ell+1)mj}{M}\right\} \rcl
=\lcl \frac{ \ell j}{(2m+1)(\ell+1)}+\left\{ \frac{mj}{2m+1}\right\} \rcl$. 
\begin{itemize}
\item Let $j=(2m+1)k+2p+r$, where $0 \leq k \leq 2\ell+1$, $0 \leq p \leq m-1$ and $r=1,2$. Then 
$$f(j)=\lcl \frac{\ell k}{\ell+1}+\frac{rm}{2m+1}+\frac{(\ell-1)p+\ell r}{(2m+1)(\ell+1)}\rcl.$$
\item Let $j=(2m+1)k$, where $1 \leq k \leq 2 \ell+1$. Then 
$$f(j)=\lcl \frac{\ell k}{\ell+1}\rcl.$$ 
\end{itemize}

(i) We prove that $\Pc_\text{odd}(\ell,m)$ contains exactly $m$ lattice points in its interior, 
i.e., we may check $\delta_d=\delta_{2\ell+1}=m$. 
\begin{itemize}
\item[(a)] For $j=(2m+1)k+2p+r$, where $0 \leq k \leq 2\ell+1$, $0 \leq p \leq m-1$ and $r=1,2$, 
if $k \leq 2 \ell$, then we see that 
\begin{align*}
f(j) &= \lcl \frac{\ell k}{\ell+1}+\frac{rm}{2m+1}+\frac{p(\ell-1)+\ell r}{(2m+1)(\ell+1)}\rcl \\
&\leq \lcl \frac{2\ell^2}{\ell+1}+\frac{2m}{2m+1}+\frac{(m-1)(\ell-1)+2\ell}{(2m+1)(\ell+1)}\rcl \\
&=2 \ell -1 + \lcl \frac{m\ell+3m+2}{(2m+1)(\ell+1)}\rcl \leq 2 \ell. 
\end{align*}
Thus, if $f(j)=2\ell+1$, then $k=2\ell+1$. Similarly, if $r=1$, then we see that $f(j) \leq 2 \ell$. 
Thus, if $f(j)=2\ell+1$, then $r=2$. On the other hand, when $k=2\ell+1$ and $r=2$, 
we obtain that 
$$f(j)=2\ell+\lcl \frac{p(\ell-1)+\ell + 2m}{(2m+1)(\ell+1)} \rcl=2\ell+1 \; 
\;\text{ for each }\; 0 \leq p \leq m-1.$$
\item[(b)]
For $j=(2m+1)k$, where $1 \leq k \leq 2 \ell+1$, we see that $f(j) \leq 2\ell$. 
\end{itemize}
From the above (a) and (b), we conclude that $\delta_{2 \ell+1}=\delta_d=m$. 

%{\bf (The second step)} 
(ii) We prove the non-unimodality of $(\delta_0,\ldots,\delta_d)$. 
\begin{itemize}
\item[(a)] The following statements imply that $\delta_1 \leq m+1$. 
\begin{itemize}
\item For $j=(2m+1)k+2p+r$, where $0 \leq k \leq 2\ell+1$, $0 \leq p \leq m-1$ and $r=1,2$, 
if $k \geq 1$, then we have $f(j) \geq 2$. Similarly, if $r=2$, then $f(j) \geq 2$. 
Thus, if $f(j)=1$, then $k=0$ and $r=1$. 
\item Moreover, for $j=(2m+1)k$, where $1 \leq k \leq 2 \ell+1$, we see that $f(j) =1$ 
if and only if $j=1$. 
\end{itemize}
\item[(b)] The following imply that $\delta_\ell \geq 2m+2$. 
\begin{itemize}
\item For $j=(2m+1)k+2p+r$, where $0 \leq k \leq 2\ell+1$, $0 \leq p \leq m-1$ and $r=1,2$, 
if $k=\ell$ and $r=1$, then we see that $f(j)=\ell$. 
Similarly, if $k=\ell-1$ and $r=2$, then we see that $f(j)=\ell$. 
\item Moreover, for $j=(2m+1)k$, where $1 \leq k \leq 2 \ell+1$, one has $f(j) =\ell$ 
if and only if $k=\ell$ or $k=\ell+1$. 
\end{itemize}
\item[(c)] The following imply that $\delta_{\ell+1} \leq 2m+1$. 
\begin{itemize}
\item For $j=(2m+1)k+2p+r$, where $0 \leq k \leq 2\ell+1$, $0 \leq p \leq m-1$ and $r=1,2$, 
if $k \leq \ell-1$, then $f(j) \leq \ell$. Moreover, $k \geq \ell+2$, then $f(j) \geq \ell+2$. 
In addition, if $k=\ell+1$ and $r=2$, then $f(j) \geq \ell+2$. 
Furthermore, if $k=\ell$ and $r=1$, then $f(j) \leq \ell$. 
Thus, it must be satisfied that 
$(k,r)=(\ell,2)$ or $(k,r)=(\ell+1,1)$ when $f(j)=\ell+1$. 
\item Moreover, for $j=(2m+1)k$, where $1 \leq k \leq 2 \ell+1$, one has $f(j) =\ell+1$ 
if and only if $k=\ell+2$. 
\end{itemize}
\item[(d)] The following imply that $\sum_{i=\ell+2}^{2\ell}\delta_i \geq 2m\ell+\ell-1$. 
Then we notice that $2m\ell+\ell-1=(\ell-1)(2m+1)+2m$. 
Hence, we obtain that $\max\{\delta_{\ell+2},\ldots,\delta_{2\ell}\} \geq 2m+2$. 
\begin{itemize}
\item For $j=(2m+1)k+2p+r$, where $0 \leq k \leq 2\ell+1$, $0 \leq p \leq m-1$ and $r=1,2$, 
if $\ell +2 \leq k \leq 2\ell$, then $\ell + 2 \leq f(j) \leq 2\ell$. 
Moreover, if $k = 2\ell+1$ and $r=1$, then $f(j) =2\ell$. 
In addition, if $k=\ell+1$ and $r=2$, then $f(j)=\ell+2$. 
\item Moreover, for $j=(2m+1)k$, where $1 \leq k \leq 2 \ell+1$, one has $\ell+2 \leq f(j) \leq 2\ell$ 
if and only if $\ell+3 \leq k \leq 2\ell+1$. 
\end{itemize}
%\item[(e)] We have seen that $\delta_{2\ell+1}=m$. 
\end{itemize}
Summarizing the above (a)--(d), one sees that 
$$\delta_1 \leq m+1, \; \delta_\ell \geq 2m+2, \; \delta_{\ell+1} \leq 2m+1 \text{ and }
\max\{\delta_{\ell+2},\ldots,\delta_{2\ell}\} \geq 2m+2.$$ % \; \text{ and }\; \delta_{2\ell+1}=m.$$
Hence, 
$$\delta_1 < \delta_\ell > \delta_{\ell+1} < \max\{\delta_{\ell+2},\ldots,\delta_{2\ell}\}.$$ 
This shows the non-unimodality of $(\delta_0,\delta_1,\ldots,\delta_{2\ell+1})$. 

\bigskip

\noindent
\underline{The case $d$ be even}: Let $d=2 \ell +2$, where $\ell \geq 2$. 
We define $\Pc_\text{even}(\ell,m)$ by setting the convex hull of $v_0,v_1,\ldots,v_d$, where $M=2(3m+1)(\ell+1)$ and 
\begin{eqnarray*}
&&v_i=
\begin{cases}
{\bf 0}, \quad &i=0, \\
\eb_i, \quad &i=1,\ldots,d-1,
\end{cases}\\
&&v_d=(M-2(\ell+1)m)(\eb_1+\eb_2)+(M-1)(\eb_3+\cdots+\eb_{d-1})+M\eb_d. 
\end{eqnarray*}
Let $\delta(\Pc_\text{even}(\ell,m))=(\delta_0,\delta_1,\ldots,\delta_d)$. 
Then we can calculate from Lemma \ref{compute} that 
$$\delta_i=\left| \left\{ j \in \ZZ : 
\lcl \frac{2 \ell j}{M}+2\left\{ \frac{2(\ell+1)mj}{M}\right\} \rcl = i, \; 0 \leq j \leq M-1 \right\} \right|.$$ 
Let $g(j)=\lcl \frac{2 \ell j}{M}+2\left\{ \frac{2(\ell+1)mj}{M}\right\} \rcl
=\lcl \frac{\ell j}{(3m+1)(\ell+1)}+2\left\{ \frac{mj}{3m+1}\right\} \rcl$. 
\begin{itemize}
\item Let $j=(3m+1)k+3p+r$, where $0 \leq k \leq 2\ell+1$, $0 \leq p \leq m-1$ and $r=1,2,3$. Then 
$$g(j)=\lcl \frac{\ell k}{\ell+1}+\frac{2rm}{3m+1}+\frac{(\ell-2)p+\ell r}{(3m+1)(\ell+1)}\rcl.$$
\item Let $j=(3m+1)k$, where $1 \leq k \leq 2 \ell+1$. Then 
$$g(j)=\lcl \frac{\ell k}{\ell+1}\rcl.$$ 
\end{itemize}
%{\bf (The first step)} 

(i) We prove that $\Pc_\text{even}(\ell,m)$ contains exactly $m$ lattice points in its interior, 
i.e., we may check $\delta_d=\delta_{2\ell+2}=m$. 
\begin{itemize}
\item[(a)] For $j=(2m+1)k+3p+r$, where $0 \leq k \leq 2\ell+1$, $0 \leq p \leq m-1$ and $r=1,2,3$, 
if $k \leq 2 \ell$, then we see that $g(j) \leq 2\ell+1$. 
%\begin{align*}
%g(j) &= \lcl \frac{\ell k}{\ell+1}+\frac{2rm}{3m+1}+\frac{(\ell-2)p+\ell r}{(3m+1)(\ell+1)}\rcl \\
%&\leq \lcl \frac{2\ell^2}{\ell+1}+\frac{6m}{3m+1}+\frac{(m-1)(\ell-2)+3\ell}{(3m+1)(\ell+1)}\rcl \\
%&=2 \ell + \lcl \frac{m\ell+4m+2}{(3m+1)(\ell+1)}\rcl \leq 2 \ell+1. 
%\end{align*}
Thus, if $g(j)=2\ell+2$, then $k=2\ell+1$. Similarly, if $r \leq 2$, then we see that $f(j) \leq 2 \ell+1$. 
Thus, if $g(j)=2\ell+2$, then $r=3$. On the other hand, when $k=2\ell+1$ and $r=3$, 
we obtain that 
$$g(j)=2\ell+1+\lcl \frac{p(\ell-2)+\ell + 3m-1}{(3m+1)(\ell+1)} \rcl = 2\ell+2 
\;\text{ for }\; 0 \leq p \leq m-1.$$
\item[(b)]
For $j=(3m+1)k$, where $1 \leq k \leq 2 \ell+1$, we see that $g(j) \leq 2\ell+1$. 
\end{itemize}
From the above (a) and (b), we conclude that $\delta_{2 \ell+2}=\delta_d=m$. 

%{\bf (The second step)} 
(ii) We prove the non-unimodality of $(\delta_0,\ldots,\delta_d)$. 
\begin{itemize}
\item[(a)] The following statements imply that $\delta_1 \leq m+1$. 
\begin{itemize}
\item For $j=(3m+1)k+3p+r$, where $0 \leq k \leq 2\ell+1$, $0 \leq p \leq m-1$ and $r=1,2,3$, 
if $k \geq 1$, then we have $g(j) \geq 2$. Similarly, if $r \geq 2$, then $g(j) \geq 2$. 
Thus, if $f(j)=1$, then $k=0$ and $r=1$. 
\item Moreover, for $j=(3m+1)k$, where $1 \leq k \leq 2 \ell+1$, we see that $g(j) =1$ 
if and only if $j=1$. 
\end{itemize}
\item[(b)] The following imply that $\delta_\ell \geq 3m+2$. 
\begin{itemize}
\item For $j=(3m+1)k+3p+r$, where $0 \leq k \leq 2\ell+1$, $0 \leq p \leq m-1$ and $r=1,2,3$, 
if $(k,r)=(\ell-2,3),(\ell-1,2)$ or $(\ell,1)$, then we see that $g(j)=\ell$. 
\item Moreover, for $j=(3m+1)k$, where $1 \leq k \leq 2 \ell+1$, one has $g(j) =\ell$ 
if and only if $k=\ell$ or $k=\ell+1$. 
\end{itemize}
\item[(c)] The following imply that $\delta_{\ell+1} \leq 3m+1$. 
\begin{itemize}
\item For $j=(3m+1)k+3p+r$, where $0 \leq k \leq 2\ell+1$, $0 \leq p \leq m-1$ and $r=1,2,3$, 
we see that $g(j) = \ell+1$ only if $(k,r)=(\ell-1,3),(\ell,2)$ or $(\ell+1,1)$. 
%if $k \leq \ell-2$, then $g(j) \leq \ell$. Moreover, $k \geq \ell+2$, then $g(j) \geq \ell+2$. 
%In addition, if $k=\ell+1$ and $r=2$, then $f(j) \geq \ell+2$. Thus, it must be satisfied that $k=\ell+1$ and $r=1$ 
%when $f(j)=\ell+1$. 
\item Moreover, for $j=(3m+1)k$, where $1 \leq k \leq 2 \ell+1$, one has $g(j) =\ell+1$ 
if and only if $k=\ell+2$. 
\end{itemize}
\item[(d)] The following imply that $\sum_{i=\ell+2}^{2\ell}\delta_i \geq 3m\ell+\ell-1$. 
Then we notice that $3m\ell+\ell-1=(\ell-1)(3m+1)+3m$. Hence, we obtain that $\max\{\delta_{\ell+2},\ldots,\delta_{2\ell}\} \geq 3m+2$. 
\begin{itemize}
\item For $j=(3m+1)k+3p+r$, where $0 \leq k \leq 2\ell+1$, $0 \leq p \leq m-1$ and $r=1,2,3$, 
if $\ell+2 \leq k \leq 2\ell-1$, then $\ell+2 \leq g(j) \leq 2 \ell$. Moreover, 
if $(k,r)=(\ell,3),(\ell+1,2),(\ell+1,3),(2\ell,1),(2\ell,2),(2\ell+1,1)$, then $\ell+2 \leq g(j) \leq 2\ell$. 
\item Moreover, for $j=(3m+1)k$, where $1 \leq k \leq 2 \ell+1$, one has $\ell+2 \leq g(j) \leq 2\ell$ 
if and only if $\ell+3 \leq k \leq 2\ell+1$. 
\end{itemize}
%\item[(e)] We have seen that $\delta_{2\ell+2}=m$. 
\end{itemize}
Summarizing the above (a)--(d), one sees that 
$$\delta_1 \leq m+1, \; \delta_\ell \geq 3m+2, \; \delta_{\ell+1} \leq 3m+1 \text{ and } 
\max\{\delta_{\ell+2},\ldots,\delta_{2\ell}\} \geq 3m+2.$$
Hence, 
$$\delta_1 < \delta_\ell > \delta_{\ell+1} < \max\{\delta_{\ell+2},\ldots,\delta_{2\ell}\},$$ 
as desired. 
}\end{Example}

\subsection{Unimodal but neither log-concave nor alternatingly increasing $\delta$-vectors}

Next, we give examples of lattice polytopes whose $\delta$-vectors are unimodal 
but neither log-concave nor alternatingly increasing for odd dimensions.

\begin{Example}\label{ex1}{\em 
Let $d \geq 5$ be an odd number and $m \geq 1$ an integer. 
We define $\Pc(d,m)$ by setting the convex hull of $v_0,v_1,\ldots,v_d$, where $M=2(d-1)m+2$ and 
\begin{eqnarray*}
v_i=
\begin{cases}
{\bf 0}, &i=0, \\
\eb_i, &i=1,\ldots,d-1, \\
(M-d+1)\eb_1+(M-1)(\eb_2+\cdots+\eb_{d-1})+M\eb_d, &i=d. 
\end{cases}
\end{eqnarray*}

Then it can be computed that $\delta(\Pc(d,m))=(\delta_0,\delta_1,\ldots,\delta_d)$ is equal to 
\begin{eqnarray*}
\delta_i=\left|\left\{ j \in \ZZ : \left\lceil \frac{(d-1)j}{2(d-1)m+2}+ 
\left\{ \frac{(d-1)j}{2(d-1)m+2}\right\}\right\rceil = i, \; 0 \leq j \leq M-1 \right\}\right|. 
\end{eqnarray*}
Let $f(j)=\left\lceil \frac{(d-1)j}{2(d-1)m+2}+\left\{ \frac{(d-1)j}{2(d-1)m+2}\right\}\right\rceil$. 
For $1 \leq j \leq (d-1)m+1$, we see the following: 
\begin{itemize}
\item One has $f(j)=1$ if $1 \leq j \leq m$; 
\item For $2 \leq k \leq (d-1)/2$, we have $f(j)=k$ if $(2k-3)m+1 \leq j \leq (2k-1)m$; 
\item One has $f(j)=(d+1)/2$ if $(d-2)m+1 \leq j \leq (d-1)m$; 
\item One has $f((d-1)m+1)=(d-1)/2$. 
\end{itemize}
For $(d-1)m+2 \leq j \leq 2(d-1)m+1$, 
it is easy that $f(j)=(d-1)/2+f(j-(d-1)m-1)$. 
Therefore, we conclude that 
$$\delta(\Pc(d,m))=(1,m,2m,\ldots,2m,\underbrace{2m+1}_{\delta_{(d-1)/2}},2m,\ldots,2m,m).$$
Since $\delta_{(d-1)/2}>\delta_{(d+1)/2}$ and $\delta_{(d-1)/2}\delta_{(d+3)/2}>\delta_{(d+1)/2}^2$, 
this $\delta$-vector is neither log-concave nor alternatingly increasing. 
On the other hand, this $\delta$-vector is unimodal. 
}\end{Example}

\subsection{Alternatingly increasing but not log-concave $\delta$-vectors}

Next, we give examples of lattice polytopes whose $\delta$-vectors are 
alternatingly increasing but not log-concave. 

%\begin{Example}\label{exex2}{\em 
%}\end{Example}

\begin{Example}\label{ex2}{\em 
Let $d \geq 4$ and $m \geq 1$ be integers. 
We define $\Pc(d,m)$ by setting the convex hull of $v_0,v_1,\ldots,v_d$, 
where $M=(\lcl (d+1)/2 \rcl m+1)\lcl (d+2)/2 \rcl$ and 
\begin{align*}
&v_i=
\begin{cases}
{\bf 0}, &i=0, \\
\eb_i, &i=1,\ldots,d-1, 
\end{cases} \\
&v_d=(M-\lcl (d+2)/2 \rcl m)(\eb_1+\cdots + \eb_{\lfr d/2 \rfr})+
(M-1)(\eb_{\lfr d/2 \rfr + 1}+\cdots+\eb_{d-1})+M\eb_d. 
\end{align*}

%%%%%%%%

\noindent
\underline{The case $d$ is odd}: 
Let $d'=(d+1)/2$. Then $M=(d'm+1)(d'+1)$. 
It can be computed that $\delta(\Pc(d,m))=(\delta_0,\delta_1,\ldots,\delta_d)$ 
is equal to 
$$\delta_i=\left| \left\{ j \in \ZZ : \lcl \frac{d'j}{(d'm+1)(d'+1)} +
\left\{ \frac{mj}{d'm+1} \right\}(d'-1) \rcl = i, \; 0 \leq j \leq M-1 \right\} \right|.$$
Let $f(j)=\lcl \frac{d'j}{(d'm+1)(d'+1)} +
\left\{ \frac{mj}{d'm+1} \right\}(d'-1) \rcl$.

For each $j=1,\ldots,M-1$, we have a unique expression such that 
$\ell(d'm+1)$ for some $1 \leq \ell \leq d'$ or $j=p(d'm+1)+qd'+r$, 
where $0 \leq p \leq d'$, $0 \leq q \leq m-1$ and $1 \leq r \leq d'$. Thus 
\begin{align*}
f(j)&=\ell-1+\lcl \frac{d' +1-\ell}{d'+1} \rcl=\ell \text{ if }j=\ell (d'm+1), \text{ and }\\
f(j)&= p+r-1 + \lcl \frac{d'+1-p-r}{d'+1}-\frac{mr-q}{(d'm+1)(d'+1)} \rcl
\text{ if }j=p(d'm+1)+qd'+r. 
\end{align*}
Note that $1 \leq mr-q \leq d'm$. 
Hence we obtain that 
\begin{align*}
&f(j)=i \; \text{ for }\; 1 \leq i \leq d'-1 \;\; \Longleftrightarrow \;\; \ell=i\text{ or }p+r=i, \\
&f(j)=d' \;\; \Longleftrightarrow \;\; \ell=d' \text{ or } p+r=d' \text{ or }p+r=d'+1, \\
&f(j)=i \; \text{ for }\; d'+1 \leq i \leq d \;\; \Longleftrightarrow \;\;p+r=i+1. 
\end{align*}
From these observations, we conclude that 
\begin{align*}
\delta(\Pc(d,m))=(1,m+1,2m+1,\ldots,(d'-1)m+1,
\underbrace{2d'm+1}_{\delta_{d'}},(d'-1)m,\ldots,m). 
\end{align*}
Clearly, this is alternatingly increasing, while this is not log-concave by 
$\delta_{d'}\delta_{d'+2}>\delta_{d'+1}^2$. 

\bigskip

%%%%%%%%%%%%%
\noindent
\underline{The case $d$ is even}: 
Let $d'=d/2$. Then $M=(d'm+m+1)(d'+1)$. 
It can be computed that $\delta(\Pc(d,m))=(\delta_0,\delta_1,\ldots,\delta_d)$ is equal to 
$$\delta_i=\left| \left\{ j \in \ZZ : \lcl \frac{d'j}{(d'm+m+1)(d'+1)} +
\left\{ \frac{mj}{d'm+m+1} \right\} d' \rcl = i, \; 0 \leq j \leq M-1 \right\} \right|.$$
Let $f(j)=\lcl \frac{d'j}{(d'm+m+1)(d'+1)} +
\left\{ \frac{mj}{d'm+m+1} \right\}d'\rcl$.

For each $1 \leq j \leq M-1$, we have a unique expression such that 
$\ell(d'm+m+1)$ for some $1 \leq \ell \leq d'$ or $j=p(d'm+m+1)+q(d'+1)+r$, 
where $0 \leq p \leq d'$, $0 \leq q \leq m-1$ and $1 \leq r \leq d'+1$. Thus 
\begin{align*}
&f(j)=
\ell-1+\lcl \frac{d'+1- \ell}{d'+1} \rcl =\ell \text{ if }j=\ell (d'm+m+1), \\
&f(j)=
p+r-1+\lcl \frac{d'+1-p-r}{d'+1}\rcl \text{ if }j=p(d'm+m+1)+q(d'+1)+r.
\end{align*}
Hence we obtain that 
\begin{align*}
&f(j)=i \; \text{ for }\; 1 \leq i \leq d'-1 \;\; \Longleftrightarrow \;\; \ell=i\text{ or }p+r=i, \\
&f(j)=d' \;\; \Longleftrightarrow \;\; \ell=d' \text{ or } p+r=d' \text{ or } p+r=d'+1, \\
&f(j)=i \; \text{ for }\; d'+1 \leq i \leq d \;\; \Longleftrightarrow \;\; p+r=i+1. 
\end{align*}
From these observations, we conclude that 
\begin{align*}
\delta(\Pc(d,m))=(1,m+1,2m+1,\ldots,(d'-1)m+1,
\underbrace{(2d'+1)m+1}_{\delta_{d'}},d'm,(d'-1)m,\ldots,m). 
\end{align*}
Clearly, this is alternatingly increasing, while this is not log-concave by $\delta_{d'}\delta_{d'+2}>\delta_{d'+1}^2.$ 
}\end{Example}

\smallskip

For the case of lattice polytopes of dimension 3, we note the following: 
\begin{Remark}{\em 
Let $(\delta_0,\delta_1,\delta_2,\delta_3)$ be the $\delta$-vector of 
some lattice polytope of dimension 3 with $\delta_3 \not=0$. 
Since $\delta_2 \geq \delta_1 \geq \delta_3$, we always have $\delta_2^2 \geq \delta_1\delta_3$. 
Moreover, as mentioned in Remark \ref{chuui} (b), $(\delta_0,\delta_1,\delta_2,\delta_3)$ is 
always alternatingly increasing. Thus, if $(\delta_0,\delta_1,\delta_2,\delta_3)$ is alternatingly increasing 
but not log-concave, then it should be $\delta_2 > \delta_1^2$. 
On the other hand, such a lattice polytope never has IDP by Remark \ref{chuui} (a). 
}\end{Remark}

For example, the $\delta$-vector of the lattice polytope with its vertices 
$\eb_1,\eb_2,\eb_3,2(\eb_1+\eb_2+\eb_3)$ is equal to $(1,1,2,1)$. This is not log-concave.

\subsection{Log-concave but not alternatingly increasing $\delta$-vectors}

Finally, we supply a cupple of examples of lattice polytopes 
whose $\delta$-vectors are log-concave but not alternatingly increasing in law dimensions. 

\bigskip

Let $\Pc_3 \subset \RR^6$ be a lattice polytope of dimension 6 whose vertices are 
$${\bf 0},\eb_1,\ldots,\eb_4,2(\eb_1+\cdots+\eb_4)+3\eb_5,16(\eb_1+\cdots+\eb_4)+3\eb_5+30\eb_6.$$
Then we have $\delta(\Pc_3)=(1,6,20,22,23,15,3)$. Moreover, let $\Pc_4$ be a lattice polytope whose vertices are 
$${\bf 0},\eb_1,\ldots,\eb_4,2(\eb_1+\cdots+\eb_4)+3\eb_5,22(\eb_1+\cdots+\eb_4)+3\eb_5+42\eb_6.$$
Then we have $\delta(\Pc_4)=(1,7,28,31,32,23,4)$. Both of them are log-concave but not alternatingly increasing. 

Similarly, we have checked the existence of some more lattice polytopes of dimension 6 
whose $\delta$-vectors are log-concave but not alternatingly increasing.

\subsection{Future works}

We remain the following problems: 
\begin{Problem} If there exists, 
construct a family of lattice polytopes whose $\delta$-vectors are \\
{\em (a)} unimodal but neither log-concave nor alternatingly increasing for even dimensions; \\
{\em (b)} alternatingly increasing but not log-concave for dimension 3; \\
{\em (c)} log-concave but not alternatingly increasing for dimension at least 5. 
\end{Problem}

\bigskip

%Finally, we remark that all of examples given in this section (Examples \ref{ex1} and \ref{ex2}) do not have IDP. 

%%%%%%%%%%%%%%%%%%%%%%%%%%%%%%%%%%%%%%%%%%%%%%%%%%%%%%%%%%%%%%%%%%%%%%%%%%%%%%%%%%%%%%%%%%%%%%%%%%%%%%%%%%%%%%%%%%%%%%%
%
%\begin{align*}
%(1+t+\cdots+t^{\ell-1})^m \text{ 'Ì } t^{i \ell} \text{ 'ÌŒW"}
%=\sum_{k=0}^i (-1)^{i-k}\binom{m}{i-k}\binom{m+\ell k -1}{\ell k}
%\end{align*}
%
%$(1+t+\cdots+t^{\ell-1})^m (1-t)^m = (1-t^\ell)^m$ 'æ'èA 
%$(1+t+\cdots+t^{\ell-1})^m=\sum_{q=0}^m(-1)^q\binom{m}{q}t^{\ell q} \sum_{r=0}^\infty \binom{m+r-1}{r}t^r$'Æ'È'èA
%'±'ê'©'çã'ª]'¤B
%
%%%%%%%%%%%%%%%%%%%%%%%%%%%%%%%%%%%%%%%%%%%%%%%%%%%%%%%%%%%%%%%%%%%%%%%%%%%%%%%%%%%%%%%%%%%%%%%%%%%%%%%%%%%%%%%%%%%%%%%


\begin{thebibliography}{10}
%\bibitem{Ath}
%C. Athanasiadis, 
%Ehrhart polynomials, simplicial polytopes, magic squares and a conjecture of Stanley, 
%{\em J. Reine Angew. Math.} {\bf 583} (2005), 163--174. 
\bibitem{Batyrev}
V. Batyrev, 
Dual polyhedra and mirror symmetry for Calabi--Yau 
hypersurfaces in toric varieties, 
{\em J. Algebraic Geom.} {\bf 3} (1994), 493--535.
\bibitem{BeckRobins}
M. Beck and S. Robins,
``Computing the Continuous Discretely,''
Undergraduate Texts in Mathematics,
Springer, 2007.
\bibitem{BeckStapledon}
M. Beck and A. Stapledon, 
On the log-concavity of Hilbert series of Veronese subrings 
and Ehrhart series, 
{\em Math. Z.} {\bf 264} (2010), 195--207. 
\bibitem{BrentiWelker}
F. Brenti and V. Welker, The Veronese construction for formal power series 
and graded algebras, {\em Adv. in Appl. Math.} {\bf 42} (2009), 545--556. 
\bibitem{BR}
W. Bruns and T. R\"omer, 
$h$-vectors of Gorenstein polytopes, 
{\em J. Combin. Theory Ser. A} {\bf 114} (2007), 65--76. 
\bibitem{CHHH}
D. A. Cox, C. Haase, T. Hibi and A. Higashitani, 
Integer decomposition property of dilated polytopes, 
{\em Electron. J. Comb.} {\bf 21} (2014), 1--17. 
\bibitem{Ehrhart}
E. Ehrhart, ``Polyn\^{o}mes Arithm\'{e}tiques et
M\'{e}thode des Poly\`{e}dres en Combinatoire,''
Birkh\"{a}user, Boston/Basel/Stuttgart, 1977.
\bibitem{HibiRedBook}
T. Hibi, ``Algebraic Combinatorics on Convex Polytopes,''
Carslaw Publications, Glebe NSW, Australia, 1992.
\bibitem{HibiCombinatorica}
T. Hibi, 
Dual polytopes of rational convex polytopes, 
{\em Combinatorica} {\bf 12} (1992), 237--240 
\bibitem{HibiLBT}
T. Hibi, A lower bound theorem for Ehrhart polynomials of convex polytopes, 
{\em Adv. in Math.} {\bf 105} (1994), 162 -- 165. 
%\bibitem{HHNan}
%T. Hibi, A. Higashitani and N. Li, 
%Hermite normal forms and $\delta$-vectors, 
%{\em J. Comb. Theory Ser. A} {\bf 119} (2012), 1158--1173. 
%\bibitem{HHN}
%T. Hibi, A. Higashitani and Y. Nagazawa, 
%Ehrhart polynomials of convex polytopes with small volume, 
%{\em European J. Combinatorics} {\bf 32} (2011), 226--232. 
%\bibitem{HigashiSS}
%A. Higashitani, 
%Shifted symmetric $\delta$-vectors of convex polytopes, 
%{\em Discrete Math.} {\bf 310} (2010), 2925--2934. 
%\bibitem{DeNegriHibi}
%E. De Negri and T. Hibi, Gorenstein algebras of Veronese type, 
%{\em J. Algebra} {\bf 193} (1997), 629--639. 
\bibitem{MP}
M. Musta\c{t}\u{a} and S. Payne, 
Ehrhart polynomials and stringy Betti numbers, {\em Math. Ann.} {\bf 113} (2006), 718--725. 
\bibitem{Payne}
S. Payne, Ehrhart series and lattice triangulations, 
{\em Discrete Comput. Geom.} {\bf 40} (2008), 365--376. 
%\bibitem{LagZieg}
%J. Lagarias, G. M. Ziegler, 
%Bounds for lattice polytopes containing a fixed number of interior points in a sublattice, 
%{\em Canad. J. Math.} {\bf 43} (1991), no. 5, 1022--1035. 
%\bibitem{Nath}
%M. B. Nathanson, 
%``Additive number theory. Inverse problems and the geometry of sumsets,'' 
%Graduate Texts in Mathematics 165, Springer-Verlag, New York, 1996.
\bibitem{SchLan}
J. Schepers and L. Van Langenhoven, 
Unimodality Questions for Integrally Closed Lattice Polytopes, 
{\em Ann. Comb.} {\bf 17} (2013), 571--589. 

\bibitem{StanleyDRCP}
R. P. Stanley, Decompositions of rational convex polytopes,
{\em Annals of Discrete Math.} {\bf 6} (1980), 333 -- 342. 

\bibitem{StanleyLC}
R.P. Stanley, Log-concave and unimodal sequences in algebra, combinatorics, and geometry, 
Graph Theory and Its Applications: East and West, 
{\em New York Acad. Sci.} {\bf 576} (1989), 500 -- 535. 

\bibitem{StanleyJPAA}
R. P. Stanley,  On the Hilbert function of a graded Cohen--Macaulay domain, 
{\em J. Pure and Appl. Algebra} {\bf 73} (1991), 307 -- 314. 

%\bibitem{Staple1}
%A. Stapledon, Inequalities and Ehrhart $\delta$-vectors, 
%{\em Trans. Amer. Math. Soc.} {\bf 361} (2009), 5615--5626.
%
%\bibitem{Staple2}
%A. Stapledon, Additive number theorem and inequalities in Ehrhart theory, 
%arXiv:0904.3035v2. 
\end{thebibliography}
\end{document}